\theoremstyle{plain}
\newtheorem{theorem}{Theorem}
\newtheorem{lemma}{Lemma}
\theoremstyle{definition}
\newtheorem{remark}{Remark}
\newcommand{\blue}[1]{{\color{black}#1}}
\title {Gradient Descent Methods for Regularized Optimization}
\author[1]{Filip Nikolovski}
\author[2]{Irena Stojkovska}
\author[3]{Katerina Had\v{z}i-Velkova Saneva}
\author[3, 4]{Zoran Had\v{z}i-Velkov}
\affil[1]{\footnotesize Ss. Cyril and Methodius University in Skopje, Macedonia, Faculty of Mechanical Engineering, e-mail: {\tt filip.nikolovski@mf.ukim.edu.mk}}
\affil[2]{\footnotesize Ss. Cyril and Methodius University in Skopje, Macedonia, Faculty of Natural Sciences and Mathematics, Institute of Mathematics, e-mail: {\tt irenatra@pmf.ukim.mk}}
\affil[3]{\footnotesize Ss. Cyril and Methodius University in Skopje, Macedonia, Faculty of Electrical Engineering and Information Technologies, e-mail: {\tt saneva@feit.ukim.edu.mk, zoranhv@feit.ukim.edu.mk}}
\affil[4]{\footnotesize Macedonian Academy of Sciences and Arts, Skopje, Macedonia}
\begin{document}

\maketitle

\begin{abstract}\blue{
	Regularization is a widely recognized technique in mathematical optimization. It can be used to smooth out objective functions, refine the feasible solution set, or prevent overfitting in machine learning models. Due to its simplicity and robustness, the gradient descent (GD) method is one of the primary methods used for numerical optimization of differentiable objective functions. However, GD is not well-suited for solving $\ell^1$ regularized optimization problems since these problems are non-differentiable at zero, causing iteration updates to oscillate or fail to converge. Instead, a more effective version of GD, called the proximal gradient descent employs a technique known as soft-thresholding to shrink the iteration updates toward zero, thus enabling sparsity in the solution. Motivated by the widespread applications of proximal GD in sparse and low-rank recovery across various engineering disciplines, we provide an overview of the GD and proximal GD methods for solving regularized optimization problems. Furthermore, this paper proposes a novel algorithm for the proximal GD method that incorporates a variable step size. Unlike conventional proximal GD, which uses a fixed step size based on the global Lipschitz constant, our method estimates the Lipschitz constant locally at each iteration and uses its reciprocal as the step size. This eliminates the need for a global Lipschitz constant, which can be impractical to compute. Numerical experiments we performed on synthetic and real-data sets show notable performance improvement of the proposed method compared to the conventional proximal GD with constant step size, both in terms of number of iterations and in time requirements.
	}
\end{abstract}

\section{Introduction}

Regularization is a crucial technique in optimization and machine learning that prevents overfitting and improves the stability, convergence, and generalizability of optimization algorithms. Regularization techniques add a penalty to the loss function, discouraging the model from relying too heavily on any one feature or fitting the data too closely. 

In optimization problems that deal with fitting models to data (e.g. machine learning models), overfitting can occur when the optimization algorithm fits noise or irrelevant details in the data. Regularization prevents the optimizer from finding overly complex solutions that fit the training data too well, but generalize poorly to new, unseen data. Non-convex optimization problems often have many local minima, and optimization algorithms can get ``stuck'' at suboptimal solutions. Regularization can smooth out the optimization algorithms by introducing penalties that prevent extreme or irregular parameter values, which leads to better-behaved objective functions, which improve the convergence properties, thus avoiding sharp minima or erratic gradients that can trap optimization methods in local optima. In high-dimensional optimization problems, regularization can guide the search for the optimal solution by discouraging unnecessary exploration of irrelevant regions of the parameter space. 

$\ell^1$ (LASSO) regularization is suitable for optimization problems involving high-dimensional data since it forces many parameters to zero (i.e. induces sparsity), therefore selecting only the most important features or parameters of the input data, \cite{Tibshirani1996}. $\ell^2$ (Ridge or Tikhonov) regularization encourages smaller parameter values that stabilizes the solution to the optimization problems, \cite{Hoerl1970}. This regularization type, common in inverse problems, is used to stabilize ill-conditioned optimization problems by penalizing large solutions. In neural networks, dropout regularization technique randomly drops units during training, \blue{forcing the network not to be overly dependent on any one connection between neighboring layers}, thus improving robustness. A combination of Ridge and LASSO regularization, known as elastic-net, benefits from the both of them and gives better predictions, \cite{Zou2005}. Another regularization method that can do both shrinkage and selection (like LASSO), but is resistant to outliers or heavy-tailed errors, is LED-lasso regularization method, \cite{Wang2007}. $\ell^{\infty}$ (MASSO) regularization gives chance to build better models based on the information from all its predictors, when predictors have comparable influence, \cite{Dimovski2017}.

Gradient descent (GD) is the primary method for regularized optimization because it is computationally efficient, scalable, adaptable to different types of regularization, and can be applied to a wide variety of optimization problems, especially in machine learning. For convex optimization problems (such as those with convex loss functions and convex regularizers like $\ell^1$ and $\ell^2$), GD with an appropriate \blue{step size} converges to the global minimum of convex problems. One of the most important variants of GD is the proximal GD, which is more efficient than GD for non-differentiable regularizers, like the $\ell^1$ norm in LASSO regression. 

Our study is motivated by the fact that the $\ell^1$ regularization has found useful applications in sparse and low-rank recovery for many engineering disciplines \cite{app0}, \cite{app2}, \cite{app1}, such as wireless communications \cite{app3}, signal/image processing \cite{app1} and statistics \cite{app5}, \cite{app4}. Particular applications include compressive sensing, sparse regression and variable selection, sparse signals separation and sparse principal component analysis (PCA) \cite{app8}, \cite{app6}, \cite{app7}. 

In this paper, we present an overview of GD and proximal GD methods for solving regularized optimization problems, with a focus on their convergence properties under both fixed and variable \blue{step sizes}. We demonstrate that GD, when applied with an appropriately chosen fixed step size, achieves linear convergence for optimization problems under mild assumptions of differentiability and Lipschitz smoothness of the objective functions. Furthermore, for strongly convex functions, GD exhibits exponential (i.e., superlinear) convergence, reaching a desired \blue{calculation} precision in the optimal solution of convex optimization problems. When the objective function is a sum of differentiable and non-differentiable convex functions, as in the case of $\ell^1$ regularization, the proximal GD method becomes the preferred approach, preserving the convergence properties of conventional GD, albeit with an additional computational cost per iteration. Further, we introduce a variant of the proximal GD method with a variable step size. \blue{Numerical experiments performed on synthetic data sets, and one commonly used real-data set in machine learning show that the proposed method \blue{significantly} outperforms the proximal GD method with a constant step size.}

\section{Gradient Descent Method}\label{sec02}

Let $f : \mathbb{R}^d \to \mathbb{R}$ be a continuously differentiable and convex function. We are solving the unconstrained optimization problem:

\begin{equation}\label{UOProblem}
	\min_{x \in \mathbb{R}^d} f(x).
\end{equation}
We assume that there is a solution $x^*$ of \eqref{UOProblem}. One of the simplest approaches for solving this problem is the \textit{gradient descent method}, originally due to Cauchy \cite{Cauchy}. Starting from an initial approximation $x_0 \in \mathbb{R}^d$ of the solution, an iterative sequence $x_1, x_2, \ldots$ is constructed as:
\begin{equation}\label{GDMethod}
	x_{k+1} = x_k - \lambda_k \nabla f(x_k), \quad k = 0, 1, 2, \ldots,
\end{equation}
where $\nabla f(\cdot)$ denotes the gradient of $f$, $\lambda_k > 0$ are scalars called \textit{step sizes}, while the antigradient $-\nabla f(x_k)$ is called \textit{search direction}. Under suitable assumptions on $f$ it can be shown that the iterative sequence converges to a local minimum; see \cite[Theorem 1.2.4]{Nesterov}. The step sizes can be selected in various ways. The simplest choice is to set $\lambda_k = \lambda \equiv \mathrm{const.}$ for all $k$; more advanced techniques select $\lambda_k$ as a minimizer of $\psi(\lambda) = f(x_k - \lambda \nabla f(x_k))$ in the antigradient direction; see \cite[sec. 3.1]{Nocedal}.

\subsection{Convergence of Lipschitz smooth functions}

Let us assume that $f$ has a Lipschitz continuous gradient, that is:
\begin{equation}\label{LsmoothDef}
\| \nabla f(x) - \nabla f(y) \| \leq L \|x - y\|,
\end{equation}
for some $L > 0$ and all $x, y \in \mathbb{R}^d$ (here $\| \cdot \|$ denotes the Euclidean $2$-norm). We call such functions \textit{$L$-smooth}. We are interested in finding a constant step size $\lambda$ in the GD method \eqref{GDMethod} which leads to maximum decrease of the function value in successive iterations. For a continuously differentiable $L$-smooth function $f$ we have (see, e.g. \cite[pg.~25]{Nesterov}):
\begin{equation}\label{SmoothnessDef}
	f(y) \leq f(x) + \nabla f(x)^T (y - x) + \frac{L}{2} \|y - x\|^2,
\end{equation}
for all $x, y \in \mathbb{R}^d$. Putting $x = x_k$ and $y = x_{k+1} = x_k - \lambda \nabla f(x_k)$ we get:
\[
f(x_{k+1}) \leq f(x_k) + \left( \frac{L}{2}\lambda^2 - \lambda \right) \|\nabla f(x_k)\|^2, \quad k = 0, 1, 2, \ldots.
\]
Minimizing the function $\varphi(\lambda) = \frac{L}{2}\lambda^2 - \lambda$ gives us a minimum $\lambda^* = \frac{1}{L}$ and $\varphi(\lambda^*) = -\frac{1}{2L}$. Hence, when using a constant step in the GD method \eqref{GDMethod}, the maximum decrease is achieved for $\lambda_k = \lambda = \frac{1}{L}$, in which case:
\begin{equation}\label{GDMaxDecrese}
	f(x_{k+1}) \leq f(x_k) - \frac{1}{2L} \|\nabla f(x_k)\|^2, \quad k = 0, 1, 2, \ldots.
\end{equation}
We can now prove the following convergence result, \cite{Gartner2023}. 

\begin{theorem}\label{sec02:thm01}
	Let $f$ be continuously differentiable, convex and $L$-smooth function with a global minimizer $x^*$. Let $\{x_k\}$ be the sequence generated by the GD method \eqref{GDMethod} with a constant step size $\lambda_k \equiv \lambda = \frac{1}{L}$. Then:
	\[
	f(x_n) - f(x^*) \leq \frac{L}{2n} \|x_0 - x^*\|^2,\quad \text{for any } n \geq 1.
	\]
\end{theorem}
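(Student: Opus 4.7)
My plan is to combine three ingredients: the descent inequality \eqref{GDMaxDecrese} already derived above, the first-order convexity inequality, and a telescoping argument for the squared distance to $x^*$. The strategy is to first bound the single-step suboptimality $f(x_{k+1})-f(x^*)$ by a decrement in $\|x_k-x^*\|^2$, then telescope and use monotonicity of the $f$-values.

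First I would expand the squared distance after one GD step:
\[
\|x_{k+1}-x^*\|^2 = \|x_k-x^*\|^2 - \tfrac{2}{L}\,\nabla f(x_k)^T(x_k-x^*) + \tfrac{1}{L^2}\|\nabla f(x_k)\|^2.
\]
For the middle term I would apply convexity in the form $\nabla f(x_k)^T(x_k-x^*) \geq f(x_k)-f(x^*)$. For the last term I would use the descent bound \eqref{GDMaxDecrese}, which gives $\tfrac{1}{L^2}\|\nabla f(x_k)\|^2 \leq \tfrac{2}{L}\bigl(f(x_k)-f(x_{k+1})\bigr)$. Substituting both and cancelling the $f(x_k)$ contributions, I expect to obtain
\[
f(x_{k+1}) - f(x^*) \leq \tfrac{L}{2}\bigl(\|x_k-x^*\|^2 - \|x_{k+1}-x^*\|^2\bigr).
\]

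Next I would sum this inequality for $k = 0,1,\ldots,n-1$. The right-hand side telescopes to $\tfrac{L}{2}\bigl(\|x_0-x^*\|^2 - \|x_n-x^*\|^2\bigr)$, which is bounded by $\tfrac{L}{2}\|x_0-x^*\|^2$. On the left, since \eqref{GDMaxDecrese} forces the sequence $\{f(x_k)\}$ to be non-increasing, every term in the sum is at least $f(x_n)-f(x^*)$, so the sum is bounded below by $n\bigl(f(x_n)-f(x^*)\bigr)$. Dividing by $n$ yields the claimed rate.

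I do not expect a serious obstacle; the only subtle point is justifying the replacement of $f(x_{k+1})$ by $f(x_n)$ on the left, which requires citing the monotone decrease guaranteed by \eqref{GDMaxDecrese}. Everything else is arithmetic on inequalities that are already available in the excerpt.
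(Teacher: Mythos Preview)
Your proposal is correct and uses the same ingredients as the paper's proof---the descent inequality \eqref{GDMaxDecrese}, first-order convexity, the telescoping of $\|x_k-x^*\|^2$, and monotonicity of $\{f(x_k)\}$---arriving at the identical intermediate bound $\sum_{k=1}^{n}\bigl(f(x_k)-f(x^*)\bigr)\le \tfrac{L}{2}\|x_0-x^*\|^2$. The only difference is organizational: you combine convexity and descent into a clean per-step inequality $f(x_{k+1})-f(x^*)\le \tfrac{L}{2}\bigl(\|x_k-x^*\|^2-\|x_{k+1}-x^*\|^2\bigr)$ before summing, whereas the paper sums the gradient-norm bound and the convexity bound separately (via the identity \eqref{sec02:VanillaAnalysis}) and then merges them; your route is slightly more direct but not materially different.
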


\begin{proof}
	Rearranging \eqref{GDMaxDecrese} and summing for $k=0, \ldots, n-1$ we obtain:
	\begin{equation}\label{sec02:eq00}
		\frac{1}{2L} \sum_{k=0}^{n-1} \|\nabla f(x_k)\|^2 \leq \sum_{k=0}^{n-1} \big( f(x_k) - f(x_{k+1}) \big) = f(x_0) - f(x_n).
	\end{equation}
	From the convexity of $f$, for any $x, y$ we have:
	\[
	f(y) \geq f(x) + \nabla f(x)^T (y - x) \Longleftrightarrow f(x) - f(y) \leq \nabla f(x)^T (x - y),
	\]
	and setting $x = x_k$ and $y = x^*$ gives:
	\begin{equation}\label{sec02:eq01}
		f(x_k) - f(x^*) \leq \nabla f(x_k)^T (x_k - x^*).
	\end{equation}
	Now, from \eqref{GDMethod} it follows that
	$\nabla f(x_k) = \frac{1}{\lambda} (x_k - x_{k+1})$, from where:
	\begin{align}
		\nabla f(x_k)^T (x_k - x^*) &= \frac{1}{\lambda}(x_k - x_{k+1})^T (x_k - x^*) = \notag \\
		&= \frac{1}{2\lambda} \left( \|x_k - x_{k+1}\|^2 + \|x_k - x^*\|^2 - \|x_{k+1} - x^*\|^2 \right) = \notag \\
		&= \frac{\lambda}{2} \|\nabla f(x_k)\|^2 + \frac{1}{2\lambda} \left( \|x_k - x^*\|^2 - \|x_{k+1} - x^*\|^2 \right), \label{sec02:VanillaAnalysis}
	\end{align}
	where in the second equality we used the following property of the norm: $\|u - v\|^2 = \|u\|^2 + \|v\|^2 - 2u^Tv$.	Summing up for $k = 0, 1, \ldots, n-1$ gives:
	\begin{align}\label{sec02:eq02}
		\sum_{k=0}^{n-1} \nabla f(x_k)^T (x_k - x^*) &= \frac{\lambda}{2}\sum_{k=0}^{n-1} \|\nabla f(x_k)\|^2 + \frac{1}{2\lambda} \left( \|x_0 - x^*\|^2 - \|x_n - x^*\|^2 \right) \leq \notag \\\
		&\leq \frac{\lambda}{2}\sum_{k=0}^{n-1} \|\nabla f(x_k)\|^2 + \frac{1}{2\lambda} \|x_0 - x^*\|^2.
	\end{align}
	Summing up \eqref{sec02:eq01}, and using \eqref{sec02:eq02} and \eqref{sec02:eq00} with $\lambda = \frac{1}{L}$ we obtain:
	\begin{align*}
		\sum_{k=0}^{n-1} \big( f(x_k) - f(x^*) \big) &\leq \sum_{k=0}^{n-1} \nabla f(x_k)^T (x_k - x^*) \leq \frac{\lambda}{2}\sum_{k=0}^{n-1} \|\nabla f(x_k)\|^2 + \frac{1}{2\lambda} \|x_0 - x^*\|^2 \\
		& \leq f(x_0) - f(x_n) + \frac{1}{2\lambda} \|x_0 - x^*\|^2,
	\end{align*}
	or equivalently:
	\[
		\sum_{k=1}^{n} \big( f(x_k) - f(x^*) \big) \leq \frac{1}{2\lambda} \|x_0 - x^*\|^2.
	\]
	From \eqref{GDMaxDecrese} we have $f(x_{k+1}) \leq f(x_k)$, so finally we get:
	\[
		f(x_n) - f(x^*) = \frac{1}{n}\sum_{k=1}^n \big( f(x_n) - f(x^*) \big) \leq \frac{1}{n}\sum_{k=1}^n \big( f(x_k) - f(x^*) \big) \leq \frac{1}{2n\lambda} \|x_0 - x^*\|^2,
	\]
	which gives the result by applying $\lambda = \frac{1}{L}$.
\end{proof}

It can be concluded, based on Theorem~\ref{sec02:thm01}, that the GD method \eqref{GDMethod} achieves $\varepsilon$-optimal solution in at most $\lceil L\|x_0 - x^*\|^2/(2\varepsilon) \rceil$ iterations, i.e. the complexity of the method under the assumptions of the theorem is $\mathcal{O}(1/\varepsilon)$.

\subsection{Convergence of $\mu$-strongly convex functions}

Assuming additional properties for the function $f$ may result in better convergence properties of the GD method \eqref{GDMethod}. Let us assume that $f$ is \textit{$\mu$-strongly convex}, i.e. there exists a parameter $\mu > 0$ such that for any $x, y \in\mathbb{R}^d$,
\begin{equation}\label{StrongConvexityDef}
	f(y) \geq f(x) + \nabla f(x)^T (y - x) + \frac{\mu}{2} \|y - x\|^2.
\end{equation}
Then we have the following convergence result, \cite{Gartner2023}.

\begin{theorem}\label{sec02:thm02}
	Let $f$ be continuously differentiable, $L$-smooth, and $\mu$-strongly convex function with a global minimizer $x^*$. Let $\{x_k\}$ be the sequence generated by the GD method \eqref{GDMethod} with a constant step size $\lambda_k \equiv \lambda = \frac{1}{L}$. Then:
	\begin{itemize}
		\item[(i)] Consecutive squared distances to $x^*$ decrease geometrically for any $k \geq 0$:
		\[
			\|x_{k+1} - x^*\|^2 \leq \left(1 - \frac{\mu}{L}\right) \|x_k - x^*\|^2,
		\]
		from where, for $n>0$:
		\[
			\|x_{n} - x^*\| \leq \left(1 - \frac{\mu}{L}\right)^{n/2} \|x_0 - x^*\|.
		\]
		\item[(ii)] The approximation error of the optimal value after $n>0$ iterations decreases exponentially in $n$:
		\[
			f(x_n) - f(x^*) \leq \frac{L}{2} \left(1 - \frac{\mu}{L}\right)^n \|x_0 - x^*\|^2.
		\]
	\end{itemize}
\end{theorem}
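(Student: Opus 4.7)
The plan is to prove part (i) first by a one-step energy argument on $\|x_{k+1}-x^\ast\|^2$, and then to deduce part (ii) by plugging the distance bound from (i) into the $L$-smoothness inequality \eqref{SmoothnessDef} applied at the optimum.

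For part (i), I would start by expanding
\[
\|x_{k+1}-x^\ast\|^2 \;=\; \bigl\|x_k - x^\ast - \lambda\nabla f(x_k)\bigr\|^2 \;=\; \|x_k-x^\ast\|^2 \;-\; 2\lambda\,\nabla f(x_k)^T(x_k-x^\ast) \;+\; \lambda^2\|\nabla f(x_k)\|^2,
\]
with $\lambda = 1/L$. To lower-bound the cross term, I would invoke $\mu$-strong convexity \eqref{StrongConvexityDef} with $x=x_k$ and $y=x^\ast$, which yields
\[
\nabla f(x_k)^T(x_k-x^\ast) \;\ge\; f(x_k)-f(x^\ast) \;+\; \tfrac{\mu}{2}\|x_k-x^\ast\|^2.
\]
To control the remaining $\|\nabla f(x_k)\|^2$ term, I would use the decrease inequality \eqref{GDMaxDecrese} already established for the step size $\lambda = 1/L$, in the form $\tfrac{1}{2L}\|\nabla f(x_k)\|^2 \le f(x_k)-f(x_{k+1}) \le f(x_k)-f(x^\ast)$. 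Substituting both estimates, the $f(x_k)-f(x^\ast)$ contributions cancel exactly, leaving
\[
\|x_{k+1}-x^\ast\|^2 \;\le\; \Bigl(1-\tfrac{\mu}{L}\Bigr)\|x_k-x^\ast\|^2,
\]
which is the first inequality of (i). The second inequality in (i) then follows by iterating this bound from $k=0$ to $n-1$ and taking square roots.

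For part (ii), I would apply the $L$-smoothness inequality \eqref{SmoothnessDef} with $x=x^\ast$ and $y=x_n$. Since $x^\ast$ is a global minimizer of the differentiable function $f$, we have $\nabla f(x^\ast)=0$, so the linear term vanishes and
\[
f(x_n)-f(x^\ast) \;\le\; \tfrac{L}{2}\|x_n-x^\ast\|^2.
\]
Combining this with the distance bound from (i) immediately gives $f(x_n)-f(x^\ast) \le \tfrac{L}{2}\bigl(1-\tfrac{\mu}{L}\bigr)^n\|x_0-x^\ast\|^2$, completing the proof.

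The main obstacle is really only the cancellation trick in part (i): one must pair strong convexity (which produces a term $+[f(x_k)-f(x^\ast)]$) with exactly the right smoothness-based bound on $\|\nabla f(x_k)\|^2$ (which produces a matching $-[f(x_k)-f(x^\ast)]$ after multiplication by $\lambda^2 = 1/L^2$), so that these function-value differences annihilate and leave a clean geometric contraction in the distances. Once that algebraic balance is seen, the rest is routine iteration and a single application of smoothness at $x^\ast$.
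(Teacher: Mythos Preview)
Your proposal is correct and follows essentially the same route as the paper's proof: the paper also combines the strong-convexity lower bound on $\nabla f(x_k)^T(x_k-x^\ast)$ with the decrease inequality \eqref{GDMaxDecrese} so that the $f(x_k)-f(x^\ast)$ terms cancel, and then handles (ii) via \eqref{SmoothnessDef} at $x^\ast$. The only cosmetic difference is that the paper packages your direct expansion of $\|x_{k+1}-x^\ast\|^2$ as the pre-derived identity \eqref{sec02:VanillaAnalysis}, but the underlying algebra is identical.
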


\begin{proof}
	Since $f$ is strongly convex, we obtain the following bound from \eqref{StrongConvexityDef} by putting $x = x_k$ and $y = x^*$:
	\begin{equation*}
		\nabla f(x_k)^T (x_k - x^*) \geq f(x_k) - f(x^*) + \frac{\mu}{2}\| x_k - x^*\|^2.
	\end{equation*}
	Combining this inequality and \eqref{sec02:VanillaAnalysis} we get:
	\begin{align}
		f(x_k) - f(x^*) &\leq \frac{\lambda}{2} \|\nabla f(x_k)\|^2 + \frac{1}{2\lambda} \left( \|x_k - x^*\|^2 - \|x_{k+1} - x^*\|^2 \right) - \frac{\mu}{2}\| x_k - x^*\|^2 \notag \\
		\Longleftrightarrow
		\|x_{k+1} - x^* \|^2 &\leq 2\lambda(f(x^*) - f(x_k)) + \lambda^2 \|\nabla f(x_k)\|^2 + (1 - \lambda \mu)\|x_k - x^*\|^2. \label{sec02:eq03}
	\end{align}
	Now, using \eqref{GDMaxDecrese} we have:
	\begin{equation*}
		f(x^*) - f(x_k) \leq f(x_{k+1}) - f(x_k) \leq -\frac{1}{2L} \|\nabla f(x_k)\|^2 = -\frac{\lambda}{2} \|\nabla f(x_k)\|^2,
	\end{equation*}
	or, equivalently:
	\begin{equation*}
		2\lambda(f(x^*) - f(x_k)) + \lambda^2 \|\nabla f(x_k)\|^2 \leq 0.
	\end{equation*}
	Thus, the last inequality and \eqref{sec02:eq03} yield:
	\[
		\|x_{k+1} - x^* \|^2 \leq (1 - \lambda \mu) \|x_k - x^*\|^2 = \left(1 - \frac{\mu}{L}\right) \|x_k - x^*\|^2,
	\]
	which proves the first part of (i). Applying this bound recursively, we get:
	\[
		\|x_{k+1} - x^* \|^2 \leq \left(1 - \frac{\mu}{L}\right)^2 \|x_{k-1} - x^*\|^2 \leq \ldots \leq \left(1 - \frac{\mu}{L}\right)^{k+1} \|x_0 - x^*\|^2,
	\]
	from where the second part of (i) follows for $k = n-1$. 
	
	Now, from \eqref{SmoothnessDef}, the smoothness definition of $f$, and the fact that $\nabla f(x^*) = 0$ we obtain:
	\begin{equation}\label{sec02:eq04}
		f(x_n) - f(x^*) \leq \nabla f(x^*)^T (x_n - x^*) + \frac{L}{2} \|x_n - x^*\|^2 = \frac{L}{2} \|x_n - x^*\|^2.
	\end{equation}
	Combining \eqref{sec02:eq04} and the result (i) yields:
	\[
		f(x_n) - f(x^*) \leq \frac{L}{2} \|x_n - x^*\|^2 \leq \frac{L}{2} \left(1 - \frac{\mu}{L}\right)^n \|x_0 - x^*\|^2,
	\]
	which proves (ii).
\end{proof}

Taking into account that $\ln{(x+1)} \leq x$ for $x > -1$, we  get that $\ln{(1 - \mu/L)} \leq -\mu/L$, so Theorem~\ref{sec02:thm02} implies that the GD method \eqref{GDMethod} achieves $\varepsilon$-optimal solution in at most $\left\lceil \frac{L}{\mu} \ln \left(\frac{L\|x_0-x^*\|^2}{2\varepsilon}\right) \right\rceil$ iterations, i.e. the complexity of the method under the assumptions of the theorem is $\mathcal{O}\big(\ln{(1/\varepsilon)}\big)$.

\section{Proximal Gradient Descent Method}\label{sec03}

As discussed in Section~\ref{sec02}, the GD method is a simple and efficient iterative tool for finding approximate solutions of the unconstrained optimization problem \eqref{UOProblem}. Now we turn our attention to a special form of the problem.

Let $F(x) \equiv f(x) + g(x)$ be a composite function where $f: \mathbb{R}^d \to \mathbb{R}$ is continuously differentiable, convex, and $L$-smooth, and $g: \mathbb{R}^d \to \mathbb{R}$ is continuous, convex, and possibly non-smooth. We are solving the problem:
\begin{equation}\label{ProxUOProblem}
	\min_{x \in \mathbb{R}^d} \left\{ F(x) \equiv f(x) + g(x) \right\}.
\end{equation}
We assume that there is a solution $x^*$ of \eqref{ProxUOProblem}. To devise a modification of the GD method that would be applicable in this case, we start by noting that the GD iterations \eqref{GDMethod} for solving problem \eqref{UOProblem} which employ a constant step size $\lambda_k \equiv \lambda$ can be seen as solutions of the sequence of subproblems:
\begin{equation}\label{sec03:eq00}
	x_{k + 1} = \mathop{\mathrm{argmin}}_{y \in \mathbb{R}^d} \left\{ f(x_k) + \nabla f(x_k)^T (y - x_k) + \frac{1}{2\lambda} \|y - x_k \|^2 \right\}.
\end{equation}
Since we can rewrite:
\begin{align*}
	f(x_k) &+ \nabla f(x_k)^T (y - x_k) + \frac{1}{2\lambda} \|y - x_k \|^2 = \\
	&= \frac{1}{2\lambda} \left( 2\lambda \nabla f(x_k)^T (y-x_k) + \|y-x_k\|^2 + \lambda^2 \|\nabla f(x_k)\|^2 - \lambda^2 \|\nabla f(x_k)\|^2 \right) + f(x_k) = \\
	&= \frac{1}{2\lambda} \| (y-x_k) + \lambda \nabla f(x_k) \|^2 - \frac{\lambda}{2} \|\nabla f(x_k)\|^2 + f(x_k) = \\
	&= \frac{1}{2\lambda} \left\| y - \big(x_k - \lambda \nabla f(x_k) \big) \right\|^2 - \frac{\lambda}{2} \|\nabla f(x_k)\|^2 + f(x_k),
\end{align*}
it follows that \eqref{sec03:eq00} is equivalent with the minimization problem:
\begin{equation}\label{sec03:eq01}
	x_{k+1} = \mathop{\mathrm{argmin}}_{y \in \mathbb{R}^d} \left\{ \frac{1}{2\lambda} \left\| y - \big(x_k - \lambda \nabla f(x_k) \big) \right\|^2 \right\}.
\end{equation}
Now, in light of problem \eqref{ProxUOProblem} we can modify \eqref{sec03:eq01} to include $g$:
\begin{align}
	x_{k + 1} 
	&= \mathop{\mathrm{argmin}}_{y \in \mathbb{R}^d} \left\{ \frac{1}{2\lambda} \left\| y - \big(x_k - \lambda \nabla f(x_k) \big) \right\|^2 + g(y)\right\}. \label{sec03:eq02}
\end{align}
For the purpose of solving \eqref{ProxUOProblem} we define a \textit{proximal mapping} as:
\begin{equation}\label{sec03:ProximalMapping}
	\mathrm{prox}_{\lambda} (z) = \mathop{\mathrm{argmin}}_{y \in \mathbb{R}^d} \left\{ \frac{1}{2\lambda} \left\| y - z \right\|^2 + g(y)\right\}.
\end{equation}
Then we can formulate a generalization of the GD method \eqref{GDMethod} which we call \textit{proximal gradient descent method} for solving \eqref{ProxUOProblem} given as follows: starting from an initial approximation $x_0 \in \mathbb{R}^d$, construct an iterative sequence $x_1, x_2, \ldots$ by setting:
\begin{equation}\label{PGDMethod}
	x_{k+1} = \mathrm{prox}_{\lambda} \big(x_k - \lambda \nabla f(x_k) \big), \quad k = 0, 1, 2, \ldots.
\end{equation}
Further, if we require the proximal GD method to have a form similar to the GD iterations \eqref{GDMethod}, then we need a quantity $G_{\lambda}(\cdot)$ such that $x_{k+1} = x_k - \lambda G_{\lambda}(x_k)$ for all $k \geq 0$, from where we can deduce that:
\begin{equation}\label{Glambda}
G_{\lambda} (x_k) =\frac{1}{\lambda} (x_k - x_{k+1}) = \frac{1}{\lambda} \left( x_k - \mathrm{prox}_{\lambda} \big(x_k - \lambda \nabla f(x_k) \big) \right).
\end{equation}
We call the quantity $G_\lambda$ \textit{generalized gradient of $f$} since it coincides with $\nabla f$ when $g \equiv 0$; see \cite[pg. 273]{Beck} for a detailed discussion.

The proximal GD method can also be interpreted as a fixed-point iteration. This follows from the fact that any solution $x^*$ of \eqref{ProxUOProblem} satisfies the optimality condition $0 \in \nabla f(x^*) + \partial g(x^*)$, which can be shown to be equivalent to:
\begin{equation}\label{fixedpoint}
x^* = \mathrm{prox}_\lambda \big(x^* - \lambda \nabla f(x^*) \big),
\end{equation}
for some $\lambda > 0$; see \cite[pg. 150]{ParikhBoyd}. Here, $\partial g(x) \subset \mathbb{R}^d$ is the \textit{subdifferential} of $g$ at $x$, defined by $ \partial g(x) = \{y\in\mathbb{R}^d \; | \; g(z) \geq g(x) + y^T(z-x) \; \text{for all} \; z \in \mathbb{R}^d \}.$ This allows us to construct a working algorithm implementation for the proximal GD method. The full description is given in Algorithm~\ref{alg:01}.

\begin{algorithm}[t]
	\caption{Proximal Gradient Descent Method with Constant Step Size}
	\begin{algorithmic}[1]
		\State \textbf{input:} $x_0 \in \mathbb{R}^d$, $N \in \mathbb{N}$
		\State \textbf{calculate:} $L$ and set $\lambda = \frac{1}{L}$
		\State \textbf{set:} $k=0$
		\While{$k < N$}
		\State $x_{k+1} = \mathrm{prox}_\lambda \big(x_k - \lambda \nabla f(x_k) \big)$
		\State $k \leftarrow k + 1$
		\EndWhile
	\end{algorithmic}
	\label{alg:01}
\end{algorithm}

\subsection{Convergence properties when $f$ is Lipschitz smooth}
To prove a convergence result for the proximal GD method (Algorithm~\ref{alg:01}) similar to the one about the GD method given in Theorem \ref{sec02:thm01}, first we prove the following lemma, \cite{Sarkar2015}.

\begin{lemma}\label{sec03:lem01}
	Let $F(x) = f(x) + g(x)$ with $f : \mathbb{R}^d \to \mathbb{R}$ continuously differentiable, convex and $L$-smooth, and $g : \mathbb{R}^d \to \mathbb{R}$ continuous, convex and possibly non-smooth. Let $0 < \lambda \leq \frac{1}{L}$ and $\{x_k\}$ be the sequence generated by Algorithm~\ref{alg:01}. Then for any $z \in \mathbb{R}^d$ and $k \geq 0$:
	\begin{equation*}
		F(x_{k+1}) \leq F(z) - G_\lambda (x_k)^T (z - x_k) - \frac{1}{2\lambda} \|x_{k+1} - x_k \|^2.
	\end{equation*}
\end{lemma}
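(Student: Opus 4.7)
The plan is to combine three ingredients: the $L$-smoothness of $f$ (which, together with the assumption $\lambda\leq 1/L$, upgrades the quadratic upper bound from constant $L/2$ to $1/(2\lambda)$), the convexity of $f$ (which turns $f(x_k)$ into a lower bound via $f(z)$), and the first-order optimality condition for the proximal subproblem defining $x_{k+1}$ (which gives a usable subgradient of $g$ at $x_{k+1}$).

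First I would apply the descent inequality \eqref{SmoothnessDef} at the pair $(x_k,x_{k+1})$ and use $L\leq 1/\lambda$ to obtain
\[
 f(x_{k+1})\leq f(x_k)+\nabla f(x_k)^T(x_{k+1}-x_k)+\frac{1}{2\lambda}\|x_{k+1}-x_k\|^2.
\]
Then, for an arbitrary $z\in\mathbb{R}^d$, convexity of $f$ gives $f(x_k)\leq f(z)+\nabla f(x_k)^T(x_k-z)$; adding this to the previous display yields
\[
 f(x_{k+1})\leq f(z)+\nabla f(x_k)^T(x_{k+1}-z)+\frac{1}{2\lambda}\|x_{k+1}-x_k\|^2.
\]

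Next I would handle $g$. The definition \eqref{sec03:ProximalMapping} says $x_{k+1}$ minimizes $\frac{1}{2\lambda}\|y-(x_k-\lambda\nabla f(x_k))\|^2+g(y)$, so its subgradient optimality condition produces a specific $v\in\partial g(x_{k+1})$, namely $v=-\nabla f(x_k)-\frac{1}{\lambda}(x_{k+1}-x_k)=G_\lambda(x_k)-\nabla f(x_k)$ by \eqref{Glambda}. Convexity of $g$ then yields $g(x_{k+1})\leq g(z)-v^T(z-x_{k+1})$. Summing the bounds for $f(x_{k+1})$ and $g(x_{k+1})$, the gradient terms combine as
\[
 \nabla f(x_k)^T(x_{k+1}-z)-(G_\lambda(x_k)-\nabla f(x_k))^T(z-x_{k+1})=-G_\lambda(x_k)^T(z-x_{k+1}),
\]
giving $F(x_{k+1})\leq F(z)-G_\lambda(x_k)^T(z-x_{k+1})+\frac{1}{2\lambda}\|x_{k+1}-x_k\|^2$.

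The final, and arguably trickiest, step is the bookkeeping that converts $(z-x_{k+1})$ into $(z-x_k)$ as requested in the statement. Using $x_{k+1}=x_k-\lambda G_\lambda(x_k)$, I would write $z-x_{k+1}=(z-x_k)+\lambda G_\lambda(x_k)$, so
\[
 -G_\lambda(x_k)^T(z-x_{k+1})=-G_\lambda(x_k)^T(z-x_k)-\lambda\|G_\lambda(x_k)\|^2,
\]
and since $\lambda\|G_\lambda(x_k)\|^2=\frac{1}{\lambda}\|x_{k+1}-x_k\|^2$, the two quadratic terms combine into the desired $-\frac{1}{2\lambda}\|x_{k+1}-x_k\|^2$, completing the proof. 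The main obstacle I anticipate is the correct identification of the subgradient $v$ from the prox optimality condition and the careful sign-tracking in the last algebraic manipulation; everything else is a direct application of convexity and smoothness.
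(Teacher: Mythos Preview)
Your proposal is correct and follows essentially the same route as the paper's proof: $L$-smoothness plus convexity of $f$, the prox optimality condition to identify $G_\lambda(x_k)-\nabla f(x_k)\in\partial g(x_{k+1})$, convexity of $g$, and then the substitution $z-x_{k+1}=(z-x_k)+\lambda G_\lambda(x_k)$ to finish. The only cosmetic difference is that you invoke $\lambda\le 1/L$ at the start (replacing $L/2$ by $1/(2\lambda)$ in the descent inequality), whereas the paper carries $L/2$ through and applies $\lambda\le 1/L$ only in the final chain of inequalities; the net computation is the same.
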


\begin{proof}
	Let $z \in \mathbb{R}^d$ and $k \geq 0$ be arbitrary. From the convexity of $f$ we have $f(z) \geq f(x_k) + \nabla f(x_k)^T (z - x_k) \Longleftrightarrow f(x_k) \leq f(z) - \nabla f(x_k)^T(z - x_k)$. Combining this inequality with the smoothness of $f$ characterized by \eqref{SmoothnessDef} with $y = x_{k+1}$ and $x = x_k$ we get:
	\begin{align}
		f(x_{k+1}) &\leq f(x_k) + \nabla f(x_k)^T (x_{k+1} - x_k) + \frac{L}{2} \|x_{k+1} - x_k \|^2 \leq \notag\\
		& \leq f(z) - \nabla f(x_k)^T (z - x_{k+1}) + \frac{L}{2} \|x_{k+1} - x_k \|^2. \label{sec03:eq04}
	\end{align}
	Similarly, from the convexity of $g$ we obtain a bound:
	\begin{equation}\label{sec03:eq05}
		g(x_{k+1}) \leq g(z) - \gamma_{k+1}^T (z - x_{k+1}),
	\end{equation}
	for an arbitrary $\gamma_{k+1} \in \partial g(x_{k+1})$. Now taking into consideration \eqref{sec03:eq02}, \eqref{sec03:ProximalMapping} and \eqref{PGDMethod}:
	\begin{align*}
		x_{k+1} &= \mathrm{prox}_\lambda \big( x_k - \lambda \nabla f(x_k) \big) = \mathop{\mathrm{argmin}}_{y \in \mathbb{R}^d} \left\{ g(y) + \frac{1}{2\lambda} \| y - (x_k - \lambda \nabla f(x_k)) \|^2 \right\} \\
		& \Longrightarrow 0 \in \partial g(x_{k+1}) + \frac{1}{\lambda} \big( x_{k+1} - (x_k - \lambda \nabla f(x_k)) \big) \\
		&\Longleftrightarrow \frac{1}{\lambda} (x_k - x_{k+1}) - \nabla f(x_k) \in \partial g(x_{k+1}) \\
		&\Longleftrightarrow G_\lambda (x_k) - \nabla f(x_k) \in \partial g(x_{k+1}).
	\end{align*}
	By the last statement, if we sum \eqref{sec03:eq04} and \eqref{sec03:eq05}, using $\gamma_{k+1} = G_\lambda(x_k) - \nabla f(x_k)$ we obtain:
	\begin{align*}
		f(x_{k+1}) + g(x_{k+1}) &\leq f(z) + g(z) - G_\lambda(x_k)^T (z - x_{k+1}) + \frac{L}{2} \|x_{k+1} - x_k \|^2 \\
		\Longleftrightarrow F(x_{k+1}) &\leq F(z) - G_\lambda(x_k)^T (z - x_{k+1}) + \frac{L}{2} \|x_{k+1} - x_k \|^2.
	\end{align*}
	Since $x_k - x_{k+1} = \lambda G_\lambda(x_k) \Longleftrightarrow \frac{1}{\lambda} \|x_{k+1} - x_k\|^2 = \lambda \|G_\lambda (x_k)\|^2$ and $\lambda \leq \frac{1}{L}$, we obtain:
	\begin{align*}
		F(x_{k+1}) &\leq F(z) - G_\lambda(x_k)^T (z - x_k + \lambda G_\lambda(x_k)) + \frac{L}{2} \|x_{k+1} - x_k \|^2 = \\
		&= F(z) - G_\lambda(x_k)^T (z - x_k) - \lambda \|G_\lambda(x_k)\|^2 + \frac{L}{2} \|x_{k+1} - x_k \|^2 \leq \\
		&\leq F(z) - G_\lambda(x_k)^T (z - x_k) - \frac{1}{\lambda} \|x_{k+1} - x_k\|^2 + \frac{1}{2\lambda} \|x_{k+1} - x_k \|^2 = \\
		&\leq F(z) - G_\lambda(x_k)^T (z - x_k) - \frac{1}{2\lambda} \|x_{k+1} - x_k\|^2,
	\end{align*}
	which proves the lemma. Alternatively, the last term in the last row of the inequality can be rewritten as $\frac{\lambda}{2}\| G_\lambda(x_k)\|^2$.
\end{proof}

We are now ready to prove the following convergence result,  \cite{Sarkar2015}.

\begin{theorem}\label{thm:3.1}
	Let $F(x) = f(x) + g(x)$ with $f : \mathbb{R}^d \to \mathbb{R}$ continuously differentiable, convex and $L$-smooth, and $g : \mathbb{R}^d \to \mathbb{R}$ continuous, convex and possibly non-smooth. Let $0 < \lambda \leq \frac{1}{L}$ and let $\{x_k\}$ be the sequence generated by Algorithm~\ref{alg:01}. Then for any $n \geq 1$ we have:
	\begin{equation*}
		F(x_n) - F(x^*) \leq \frac{1}{2n\lambda} \|x_0 - x^*\|^2,
	\end{equation*}
	where $x^*$ is a solution to \eqref{ProxUOProblem}.
\end{theorem}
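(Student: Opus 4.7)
My strategy is to mirror the argument used for Theorem~\ref{sec02:thm01}, with Lemma~\ref{sec03:lem01} now playing the role that the smooth descent inequality \eqref{GDMaxDecrese} played there, and with the same polarization trick $\|u-v\|^2 = \|u\|^2 + \|v\|^2 - 2u^T v$ doing the bookkeeping as in \eqref{sec02:VanillaAnalysis}. I will invoke Lemma~\ref{sec03:lem01} twice: once with $z = x^*$, which produces a telescoping per-iteration bound on $F(x_{k+1}) - F(x^*)$, and once with $z = x_k$, which gives monotone decrease of the function values along the iterates and lets me replace an average by the final value.

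For the first invocation, setting $z = x^*$ in Lemma~\ref{sec03:lem01} yields $F(x_{k+1}) - F(x^*) \leq G_\lambda(x_k)^T(x_k - x^*) - \frac{1}{2\lambda}\|x_{k+1} - x_k\|^2$. Using $G_\lambda(x_k) = \frac{1}{\lambda}(x_k - x_{k+1})$ from \eqref{Glambda} and expanding the inner product via the polarization identity, the same computation as in \eqref{sec02:VanillaAnalysis} gives
\begin{equation*}
G_\lambda(x_k)^T(x_k - x^*) = \frac{1}{2\lambda}\bigl(\|x_{k+1} - x_k\|^2 + \|x_k - x^*\|^2 - \|x_{k+1} - x^*\|^2\bigr).
\end{equation*}
Substituting this back, the $\|x_{k+1}-x_k\|^2$ terms cancel exactly and I obtain the telescoping bound $F(x_{k+1}) - F(x^*) \leq \frac{1}{2\lambda}\bigl(\|x_k - x^*\|^2 - \|x_{k+1} - x^*\|^2\bigr)$. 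Summing for $k = 0,\ldots,n-1$ collapses the right-hand side to at most $\frac{1}{2\lambda}\|x_0 - x^*\|^2$, after discarding the nonpositive $-\|x_n - x^*\|^2$ remainder.

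For the second invocation, putting $z = x_k$ in Lemma~\ref{sec03:lem01} kills the inner-product term and yields $F(x_{k+1}) \leq F(x_k) - \frac{1}{2\lambda}\|x_{k+1} - x_k\|^2 \leq F(x_k)$. Therefore $\{F(x_k)\}$ is nonincreasing and $F(x_n) \leq F(x_k)$ for every $1 \leq k \leq n$, so $n\bigl(F(x_n) - F(x^*)\bigr) \leq \sum_{k=1}^n \bigl(F(x_k) - F(x^*)\bigr) \leq \frac{1}{2\lambda}\|x_0 - x^*\|^2$, and dividing by $n$ is the claim.

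I do not expect a serious obstacle: the heavy lifting has already been packaged inside Lemma~\ref{sec03:lem01}, and the only point requiring care is that the residual $-\frac{1}{2\lambda}\|x_{k+1}-x_k\|^2$ in the lemma must cancel precisely against the $+\frac{1}{2\lambda}\|x_{k+1}-x_k\|^2$ produced by the polarization identity when $z=x^*$. That exact cancellation is what makes the per-step decrement a clean difference of squared distances to $x^*$, and it is ultimately the reason the hypothesis $\lambda \leq 1/L$ (encoded in Lemma~\ref{sec03:lem01}) suffices to recover the same $\mathcal{O}(1/n)$ rate, and therefore the same $\mathcal{O}(1/\varepsilon)$ complexity, as in the smooth case of Theorem~\ref{sec02:thm01}.
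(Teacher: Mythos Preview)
Your proposal is correct and follows essentially the same argument as the paper's own proof: both invoke Lemma~\ref{sec03:lem01} with $z=x_k$ to obtain monotone decrease and with $z=x^*$ to obtain the telescoping bound $F(x_{k+1})-F(x^*)\le \frac{1}{2\lambda}\bigl(\|x_k-x^*\|^2-\|x_{k+1}-x^*\|^2\bigr)$, then sum and use monotonicity. The only cosmetic difference is that the paper reaches the telescoping identity by completing the square in $G_\lambda(x_k)$, whereas you reach it via the polarization identity as in \eqref{sec02:VanillaAnalysis}; the computations are equivalent.
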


\begin{proof}
	Setting $z = x_k$ in the result of Lemma~\ref{sec03:lem01} we get:
	\begin{equation}\label{sec03:eq06}
		F(x_{k+1}) \leq F(x_k) - \frac{1}{2\lambda} \|x_{k+1} - x_k \|^2,
	\end{equation}
	i.e. the sequence of generated function values for $F$ is monotonically decreasing:
	\[
		F(x_{k+1}) \leq F(x_k) \leq \ldots \leq F(x_0).
	\]
	Similarly, setting $z = x^*$ in the lemma we get:
	\begin{align}
		F(x_{k+1}) - F(x^*) &\leq G_\lambda(x_k)^T (x_k - x^*) - \frac{1}{2\lambda} \|x_{k+1} - x_k\|^2 = \notag \\
		&= \frac{1}{2\lambda} \left[ 2\lambda G_\lambda(x_k)^T (x_k - x^*) - \|x_{k+1} - x_k \|^2 \right] = \notag \\
		&= \frac{1}{2\lambda} \left[ 2\lambda G_\lambda(x_k)^T (x_k - x^*) - \|\lambda G_\lambda(x_k) \|^2 \right] = \notag \\
		&= \frac{1}{2\lambda} \left[ \|x_k - x^*\|^2 - \|x^* - (x_k - \lambda G_\lambda(x_k))\|^2 \right] = \notag \\
		&= \frac{1}{2\lambda} \left[ \|x_k - x^*\|^2 - \|x_{k+1} - x^*\|^2 \right]. \label{sec03:eq07}
	\end{align}
	Summing the inequalities in \eqref{sec03:eq07} for $k=0, 1, \ldots, n-1$ we get:
	\[
		\sum_{k=0}^{n-1} \big( F(x_{k+1}) - F(x^*) \big) \leq \frac{1}{2\lambda} \|x_0 - x^*\|^2,
	\]
	while taking \eqref{sec03:eq06} into consideration, we have:
	\[
	n\big( F(x_n) - F(x^*) \big) = \sum_{k=0}^{n-1} \big( F(x_n) - F(x^*) \big) \leq \sum_{k=0}^{n-1} \big( F(x_{k+1}) - F(x^*) \big) \leq \frac{1}{2\lambda} \|x_0 - x^*\|^2,
	\]
	from where the result follows by dividing through by $n$.
\end{proof}

\begin{remark}
	Note that Theorem~\ref{thm:3.1} holds for the constant step size $\lambda = \frac{1}{L}$.
\end{remark}

Theorem~\ref{thm:3.1} assesses the complexity of the proximal GD method given by Algorithm~\ref{alg:01}, and demonstrates that it is $\mathcal{O}(1/\varepsilon)$, i.e. it is of the same order as the complexity of the GD method, as shown in Theorem~\ref{sec02:thm01}. Note however, that the proximal GD method is more involved than the GD method since it requires the additional effort of calculating the proximal mapping in order to generate the iterates. In general, this is no trivial task, however it is possible to obtain a closed form solution in some special cases.

\subsection{Convergence properties when $f$ is $\mu$-strongly convex}

We will show that under an additional assumption that $f$ is $\mu$-strongly convex the convergence rate of the proximal GD method improves significantly. In the proof of Lemma~\ref{sec03:lem01} we showed that $x_{k+1} = \mathrm{prox}_\lambda \big( x_k - \lambda \nabla f(x_k) \big) = x_k - \lambda G_\lambda (x_k)$, where $G_\lambda (x_k) = \nabla f(x_k) + \gamma_{k+1}$ with $\gamma_{k+1} \in \partial g(x_{k+1})$. First we prove that the progress of the iterates $\{x_k\}$ to the solution $x^*$ can be bounded by the norm of $G_\lambda$, and the strong convexity parameter $\mu$. We have the following lemma, \cite{Denizcan}.

\begin{lemma}\label{sec03:lem02}
	Let $F(x) = f(x) + g(x)$ with $f : \mathbb{R}^d \to \mathbb{R}$ continuously differentiable, $L$-smooth and $\mu$-strongly convex, $g : \mathbb{R}^d \to \mathbb{R}$ continuous, convex and possibly non-smooth. Then the distance of the iterates $\{x_k\}$, generated by Algorithm~\ref{alg:01}, to the optimal solution $x^*$ of \eqref{ProxUOProblem}, can be bounded as:
	\[
		\|x_k - x^*\| \leq \frac{2}{\mu} \| G_\lambda (x_k) \|,
	\]
	for any $k \geq 0$ and $0 < \lambda \leq \frac{1}{L}$, where $G_\lambda (x_k) = \nabla f(x_k) + \gamma_{k+1}$, and $\gamma_{k+1} \in \partial g(x_{k+1})$.
\end{lemma}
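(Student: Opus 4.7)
My plan is to strengthen the argument of Lemma~\ref{sec03:lem01} by exploiting the strong convexity of $f$, and then evaluate the resulting inequality at the optimum $z = x^*$.

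First I would revisit the proof of Lemma~\ref{sec03:lem01}. That proof used only the plain convexity of $f$ to write $f(x_k) \leq f(z) - \nabla f(x_k)^T (z - x_k)$. Under the present hypothesis that $f$ is $\mu$-strongly convex, I would replace this step by $f(x_k) \leq f(z) - \nabla f(x_k)^T (z - x_k) - \frac{\mu}{2}\|z - x_k\|^2$, obtained by rearranging \eqref{StrongConvexityDef} with the roles of $x$ and $y$ swapped. Carrying this extra quadratic term through the rest of the computation in Lemma~\ref{sec03:lem01} verbatim yields the strengthened inequality
\[
F(x_{k+1}) \leq F(z) - G_\lambda(x_k)^T (z - x_k) - \frac{1}{2\lambda}\|x_{k+1} - x_k\|^2 - \frac{\mu}{2}\|z - x_k\|^2,
\]
valid for every $z \in \mathbb{R}^d$ and every $0 < \lambda \leq 1/L$.

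Next I would set $z = x^*$. Since $x^*$ is a minimizer of $F$ we have $F(x_{k+1}) - F(x^*) \geq 0$, so the left-hand side can be dropped to give
\[
\frac{\mu}{2}\|x_k - x^*\|^2 + \frac{1}{2\lambda}\|x_{k+1} - x_k\|^2 \leq G_\lambda(x_k)^T (x_k - x^*).
\]
The term $\frac{1}{2\lambda}\|x_{k+1} - x_k\|^2$ is non-negative and can be discarded, and the right-hand side is bounded above by $\|G_\lambda(x_k)\|\,\|x_k - x^*\|$ via the Cauchy--Schwarz inequality. Dividing both sides by $\|x_k - x^*\|$ (the case $x_k = x^*$ being trivial) then yields the claimed bound
\[
\|x_k - x^*\| \leq \frac{2}{\mu}\|G_\lambda(x_k)\|.
\]

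The only mildly delicate point is ensuring that the inclusion $G_\lambda(x_k) - \nabla f(x_k) = \gamma_{k+1} \in \partial g(x_{k+1})$ derived in the proof of Lemma~\ref{sec03:lem01} still gives a valid subgradient inequality for $g$ evaluated at $z = x^*$, since that subgradient lives at $x_{k+1}$, not at $x_k$. This is automatic from the definition of the subdifferential, and the telescoping $\gamma_{k+1}^T(x^* - x_{k+1}) = \gamma_{k+1}^T(x^* - x_k) + \gamma_{k+1}^T(x_k - x_{k+1})$ is precisely what lets the $x_{k+1}$-subgradient combine with $\nabla f(x_k)$ into the single $G_\lambda(x_k)$ term together with the gradient-step identity $x_k - x_{k+1} = \lambda G_\lambda(x_k)$, exactly as in Lemma~\ref{sec03:lem01}. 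No further obstacle is expected.
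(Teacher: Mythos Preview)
Your proof is correct, but it takes a genuinely different route from the paper's. The paper invokes the \emph{firm non-expansiveness} of the proximal operator, $\|\mathrm{prox}_\lambda(x)-\mathrm{prox}_\lambda(y)\|^2 \le (\mathrm{prox}_\lambda(x)-\mathrm{prox}_\lambda(y))^T(x-y)$, applies it with $x=x_k-\lambda\nabla f(x_k)$ and $y=x^*-\lambda\nabla f(x^*)$ together with the fixed-point characterization \eqref{fixedpoint}, and after expansion, Cauchy--Schwarz, and $L$-smoothness arrives at $(x_k-x^*)^T(\nabla f(x_k)-\nabla f(x^*))\le 2\|G_\lambda(x_k)\|\,\|x_k-x^*\|$; strong convexity then supplies the lower bound $\mu\|x_k-x^*\|^2$ for the left-hand side. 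Your argument instead reopens Lemma~\ref{sec03:lem01}, upgrades the convexity inequality for $f$ to its strongly convex version, and evaluates the resulting descent-type bound at $z=x^*$, using only $F(x_{k+1})\ge F(x^*)$ and Cauchy--Schwarz. Your approach is more self-contained---it needs neither firm non-expansiveness (which the paper imports from \cite{Beck}) nor the fixed-point identity \eqref{fixedpoint}---and it reuses machinery already built in Lemma~\ref{sec03:lem01}. The paper's route, on the other hand, isolates the operator-theoretic fact that drives the estimate and keeps the roles of $L$-smoothness and $\mu$-strong convexity cleanly separated, which can be advantageous when one later wants to vary either hypothesis.
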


\begin{proof}
	The firm non-expansiveness property holds for the proximal mapping (see Beck \cite{Beck}, pg. 158), i.e. for any $x, y \in \mathbb{R}^d$:
	\[
		\| \mathrm{prox}_\lambda (x) - \mathrm{prox}_\lambda (y) \|^2 \leq \big( \mathrm{prox}_\lambda (x) - \mathrm{prox}_\lambda (y) \big)^T(x - y).
	\]
	Putting $x = x_k - \lambda \nabla f(x_k)$ and $y = x^* - \lambda \nabla f(x^*)$, and by (\ref{PGDMethod}) and (\ref{fixedpoint}), we obtain:
	\begin{align*}
		\| x_{k+1} - x^* \|^2 &\leq (x_{k+1} - x^*)^T (x_k - \lambda \nabla f(x_k) - x^* + \lambda \nabla f(x^*)) \\ 
		\Longleftrightarrow \|x_k - \lambda G_\lambda (x_k) - x^*\|^2 &\leq (x_k - \lambda G_\lambda (x_k) - x^*)^T(x_k - \lambda \nabla f(x_k) - x^* + \lambda \nabla f(x^*)) \\
		\Longleftrightarrow 0 &\leq (x_k - \lambda G_\lambda (x_k) - x^*)^T (G_\lambda (x_k) + \nabla f(x^*) - \nabla f(x_k)),
	\end{align*}
	from where with further expansion we get:
	\begin{align}
		(x_k - x^*)^T (\nabla f(x_k) - \nabla f(x^*)) &\leq (x_k - x^*)^T G_\lambda (x_k) + \lambda G_\lambda (x_k)^T (\nabla f(x^*) - \nabla f(x_k)) \leq \notag \\
		&\leq \|G_\lambda (x_k)\| \|x_k - x^*\| + \lambda \|G_\lambda (x_k)\| \|\nabla f(x^*) - \nabla f(x_k)\| = \notag \\
		&= \|G_\lambda (x_k)\| \big( \|x_k - x^*\| + \lambda \|\nabla f(x^*) - \nabla f(x_k)\| \big) \leq \notag \\
		&\leq \|G_\lambda (x_k)\| \big( \|x_k - x^*\| + \lambda L\|x_k - x^*\| \big) \leq \notag \\
		&\leq (1+\lambda L) \|G_\lambda (x_k)\| \|x_k - x^*\| \leq \notag \\
		&\leq 2 \|G_\lambda (x_k)\| \|x_k - x^*\|. \label{sec03:eq09}
	\end{align}
	On the other hand, the strong convexity of $f$ yields:
	\[
		\mu \|x_k - x^*\|^2 \leq (x_k - x^*)^T(\nabla f(x_k) - \nabla f(x^*)),
	\]
	so combining the last inequality with \eqref{sec03:eq09} gives the result.
\end{proof}

Now we prove the main convergence result for a $\mu$-strongly convex $f$, \cite{Denizcan}.

\begin{theorem}\label{thm:ExpConvProxGrad}
	Let $F(x) = f(x) + g(x)$ with $f : \mathbb{R}^d \to \mathbb{R}$ continuously differentiable, $L$-smooth and $\mu$-strongly convex, and $g : \mathbb{R}^d \to \mathbb{R}$ continuous, convex and possibly non-smooth. Let $\{x_k\}$ be the sequence generated by Algorithm~\ref{alg:01}. Then for $0 < \lambda \leq \frac{1}{L}$ the following holds for all $k \geq 1$:
	\begin{equation*}
		F(x_k) - F(x^*) \leq \left(1 + \frac{\lambda \mu}{4}\right)^{-k} \big( F(x_0) - F(x^*) \big),
	\end{equation*}
where $x^*$ is a solution to \eqref{ProxUOProblem}.
\end{theorem}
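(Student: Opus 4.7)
The plan is to combine the two preceding lemmas to obtain a one-step contraction of the sub-optimality gap $F(x_k)-F(x^*)$. Everything flows out of Lemma~\ref{sec03:lem01} applied with two different choices of $z$, together with the key bound $\|x_k-x^*\|\leq (2/\mu)\|G_\lambda(x_k)\|$ from Lemma~\ref{sec03:lem02}.

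First, I would set $z=x_k$ in Lemma~\ref{sec03:lem01} to recover the descent inequality \eqref{sec03:eq06}, rewritten using $x_{k+1}-x_k=-\lambda G_\lambda(x_k)$ as
\[
F(x_k)-F(x_{k+1})\geq \tfrac{\lambda}{2}\|G_\lambda(x_k)\|^2.
\]
Next I would set $z=x^*$ in Lemma~\ref{sec03:lem01}, drop the (negative) quadratic term, and apply Cauchy--Schwarz:
\[
F(x_{k+1})-F(x^*)\leq G_\lambda(x_k)^T(x_k-x^*)\leq \|G_\lambda(x_k)\|\,\|x_k-x^*\|.
\]
Plugging Lemma~\ref{sec03:lem02} into the right-hand side converts this into a bound purely in terms of $\|G_\lambda(x_k)\|^2$:
\[
F(x_{k+1})-F(x^*)\leq \tfrac{2}{\mu}\,\|G_\lambda(x_k)\|^2.
\]

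Combining the two displays eliminates $\|G_\lambda(x_k)\|^2$ and yields
\[
F(x_k)-F(x_{k+1})\geq \tfrac{\lambda\mu}{4}\bigl(F(x_{k+1})-F(x^*)\bigr),
\]
which I would rearrange as
\[
\bigl(1+\tfrac{\lambda\mu}{4}\bigr)\bigl(F(x_{k+1})-F(x^*)\bigr)\leq F(x_k)-F(x^*).
\]
A straightforward induction on $k$ (using that this contraction holds for every iteration) then gives the claimed inequality $F(x_k)-F(x^*)\leq (1+\lambda\mu/4)^{-k}(F(x_0)-F(x^*))$.

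The only nontrivial step is the second one: one has to recognize that Lemma~\ref{sec03:lem02} is exactly the tool needed to turn the linear quantity $\|G_\lambda(x_k)\|\,\|x_k-x^*\|$ coming out of Cauchy--Schwarz into a quadratic in $\|G_\lambda(x_k)\|$, so that it can be balanced against the descent term $\tfrac{\lambda}{2}\|G_\lambda(x_k)\|^2$. Everything else is bookkeeping, and the requirement $0<\lambda\leq 1/L$ enters only through the hypotheses of the two lemmas.
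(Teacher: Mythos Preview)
Your proof is correct and reaches exactly the same one-step contraction
\[
\Bigl(1+\tfrac{\lambda\mu}{4}\Bigr)\bigl(F(x_{k+1})-F(x^*)\bigr)\leq F(x_k)-F(x^*)
\]
as the paper, but the route is organized differently. You invoke Lemma~\ref{sec03:lem01} twice as a black box (at $z=x_k$ for the descent inequality, and at $z=x^*$ for the bound $F(x_{k+1})-F(x^*)\leq G_\lambda(x_k)^T(x_k-x^*)$), then apply Cauchy--Schwarz followed by Lemma~\ref{sec03:lem02}. The paper, by contrast, does \emph{not} appeal to Lemma~\ref{sec03:lem01} here at all: it re-derives the descent inequality from $L$-smoothness and convexity of $g$, then applies Lemma~\ref{sec03:lem02} first and Cauchy--Schwarz in reverse to pass from $\|G_\lambda(x_k)\|^2$ to the inner product $G_\lambda(x_k)^T(x_k-x^*)$, and finally establishes $-G_\lambda(x_k)^T(x_k-x^*)\leq F(x^*)-F(x_{k+1})$ from scratch via a separate chain of inequalities (its equations \eqref{sec03:eq12}--\eqref{sec03:eq13}). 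Your version is more economical, since that last chain essentially re-proves a special case of Lemma~\ref{sec03:lem01}; the paper's version is more self-contained. The mathematical content is the same, only the bookkeeping and the order of Cauchy--Schwarz versus Lemma~\ref{sec03:lem02} differ.
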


\begin{proof}
	Since $f$ is $L$-smooth, subtracting two subsequent function values gives:
	\begin{align}
		F(x_{k+1}) - F(x_k) &= f(x_{k+1}) - f(x_k) + g(x_{k+1}) - g(x_k) \leq \notag \\
		&\leq \nabla f(x_k)^T (x_{k+1} - x_k) + \frac{L}{2}\|x_{k+1} - x_k \|^2 + g(x_{k+1}) - g(x_k) = \notag \\
		&= -\lambda \nabla f(x_k)^T G_\lambda (x_k) + g(x_{k+1}) - g(x_k) + \lambda^2 \frac{L}{2}\|G_\lambda (x_k)\|^2. \label{sec03:eq10}
	\end{align}
	Also, since $g$ is convex, using the definition of $G_\lambda$ we have:
	\begin{align*}
		-\lambda \nabla f(x_k)^T G_\lambda (x_k) &+ g(x_{k+1}) - g(x_k) = -\lambda (G_\lambda (x_k) - \gamma_{k+1})^T G_\lambda (x_k) + g(x_{k+1}) - g(x_k) = \\
		&= -\lambda \|G_\lambda (x_k)\|^2 + \lambda \gamma_{k+1}^T G_\lambda (x_k) + g(x_{k+1}) - g(x_k) = \\
		&= -\lambda \|G_\lambda (x_k)\|^2 - \gamma_{k+1}^T (x_{k+1} - x_k) + g(x_{k+1}) - g(x_k) \leq \\
		&\leq -\lambda \|G_\lambda (x_k)\|^2.
	\end{align*}
	Combining the last inequality and \eqref{sec03:eq10} we get the following bound using Lemma~\ref{sec03:lem02} and $\lambda \leq \frac{1}{L}$:
	\begin{align}
		F(x_{k+1}) - F(x_k) &\leq -\lambda \|G_\lambda (x_k)\|^2 + \lambda^2 \frac{L}{2}\|G_\lambda (x_k)\|^2 \leq -\frac{\lambda}{2} \|G_\lambda (x_k)\|^2 \leq \notag\\
		& \leq -\frac{\lambda \mu}{4} \|G_\lambda (x_k)\| \|x_{k} - x^*\| \leq -\frac{\lambda \mu}{4} G_\lambda (x_k)^T (x_k - x^*). \label{sec03:eq11}
	\end{align}
	Since $f$ and $g$ are both convex, we can write the product on the right-hand side as:
	\begin{align}
		-G_\lambda (x_k)^T (x_k - x^*) &= -\big( \nabla f(x_k) + \gamma_{k+1} \big)^T (x_k - x^*) = \notag \\
		&= -\nabla f(x_k)^T (x_k - x^*) - \gamma_{k+1}^T (x_k - x_{k+1} + x_{k+1} - x^*) = \notag \\
		&= -\nabla f(x_k)^T (x_k - x^*) + \gamma_{k+1}^T (x_{k+1} - x_k) - \gamma_{k+1}^T (x_{k+1} - x^*) = \notag \\
		&= -\nabla f(x_k)^T (x_k - x^*) - \lambda \gamma_{k+1}^T G_\lambda (x_k) - \gamma_{k+1}^T (x_{k+1} - x^*) \leq \notag \\
		&\leq f(x^*) - f(x_k) + g(x^*) - g(x_{k+1}) - \lambda \gamma_{k+1}^T G_\lambda (x_k). \label{sec03:eq12}
	\end{align}
	We bound the last term on the right-hand side of \eqref{sec03:eq12} using the $L$-smoothness of $f$ as following:
	\begin{align}
		-\lambda\gamma_{k+1}^T G_\lambda (x_k) &= - \lambda \big( G_\lambda (x_k) -\nabla f(x_k) \big)^T G_\lambda (x_k) = \notag \\
		&= -\lambda \|G_\lambda(x_k)\|^2 - \nabla f(x_k)^T (x_{k+1} - x_k) \leq \notag \\
		&\leq -\lambda \|G_\lambda(x_k)\|^2 - f(x_{k+1}) + f(x_k) +  \frac{L}{2}\|x_{k+1} - x_k \|^2 \leq \notag \\
		&\leq f(x_k) - f(x_{k+1}) - \frac{\lambda}{2} \|G_\lambda(x_k)\|^2. \notag
	\end{align}
	Using this bound in \eqref{sec03:eq12} gives:
	\begin{align}
		-G_\lambda (x_k)^T (x_k - x^*) &\leq f(x^*) + g(x^*) - g(x_{k+1}) - f(x_{k+1}) - \frac{\lambda}{2} \|G_\lambda(x_k)\|^2 = \notag\\
		&= F(x^*) - F(x_{k+1}) - \frac{\lambda}{2} \|G_\lambda(x_k)\|^2 \leq \notag \\
		&\leq F(x^*) - F(x_{k+1}), \label{sec03:eq13}
	\end{align}
	while substituting \eqref{sec03:eq13} in \eqref{sec03:eq11} yields:
	\begin{align*}
		F(x_{k+1}) - F(x_{k}) &\leq \frac{\lambda \mu}{4} \big( F(x^*) - F(x_{k+1}) \big) \\
		\Longleftrightarrow \frac{\lambda \mu}{4} \big( F(x_{k+1}) - F(x^*) \big) &\leq F(x_{k}) - F(x_{k+1}).
	\end{align*}
	Adding and subtracting $F(x^*)$ on the right-hand side gives the bound:
	\begin{equation}\label{sec03:eqLast}
		F(x_{k+1}) - F(x^*) \leq \left( 1 + \frac{\lambda \mu}{4} \right)^{-1} \big( F(x_k) - F(x^*) \big),
	\end{equation}
	from where the result of the theorem follows recursively.
\end{proof}

Theorem~\ref{thm:ExpConvProxGrad} demonstrates that the complexity of the proximal GD method under the assumption of $\mu$-strong convexity for the function $f$ is of order $\mathcal{O} (\log{(1/\varepsilon)})$ which is a noticeable improvement of the result from Theorem~\ref{thm:3.1}.

\subsection{Proximal mapping of least absolute shrinkage and selection operator (LASSO) regularization}

We now demonstrate how the proximal GD method can be constructed in the case of LASSO or $\ell^1$ regularization. In this context the function $g$ in the definition of problem \eqref{ProxUOProblem} is:
\begin{equation}\label{sec03:l1reg}
	g(x) = \alpha \|x \|_1 = \alpha \sum_{i=1}^d |x_i|,
\end{equation}
where $\alpha > 0$ is a \textit{regularization parameter}. Regularization procedures are frequently applied in practice to limit the magnitude of the components of the solution. In particular, the LASSO is one of the three most frequently used regularization techniques in regression modeling (the other two being the Ridge $\ell^2$ regularization, and the elastic-net regularization). The LASSO regularization has the added property of eliminating non-influential or non-important regression variables from the solution by \textit{annulling} the coefficients of the appropriate components in the solution. As can be seen from the definition of $g$, using $1$-norm results in $g$ not being differentiable at some points; this makes proximal GD method an ideal candidate for constructing iterative methods for solving this type of regularized problems.

For convenience, let us define a function $\varphi(y, z)$ from \eqref{sec03:ProximalMapping} with $g(y) = \alpha \|y\|_1$ as:
\[
	\varphi(y, z) =  \frac{1}{2\lambda} \left\| y - z \right\|^2 + \alpha \|y\|_1
	= \frac{1}{2\lambda} \sum_{i=1}^d (y_i - z_i)^2 + \alpha \sum_{i=1}^d |y_i|
\]
The proximal mapping then gets the following form:
\[
	\mathrm{prox}_\lambda (z) = \mathop{\mathrm{argmin}}_{y\in\mathbb{R}^d} \varphi(y, z).
\]
To solve this problem, we differentiate $\varphi$ with respect to $y_i$, and equating the partial derivatives to zero gives:
\begin{align*}
	\frac{\partial \varphi}{\partial y_i}(y, z) &= \frac{1}{\lambda} (y_i - z_i) + \alpha\, \mathrm{sgn}(y_i) = 0 \\
	\Longrightarrow z_i &= y_i + \alpha\lambda\, \mathrm{sgn}(y_i)
	= \begin{cases}
		y_i + \alpha\lambda, & y_i > 0 \\
		y_i - \alpha\lambda, & y_i < 0
	\end{cases},
\end{align*}
where we conveniently used that $\frac{d}{dw}(|w|) = \mathrm{sgn}(w)$ for $w\neq 0$. From the solutions for $z_i$ we can obtain expressions for $y_i$ and formulate an explicit form for the proximal mapping:
\begin{equation}\label{sec03:eq08}
	\mathrm{prox}_\lambda (z) = 
	\begin{cases}
		z_i - \alpha\lambda, & z_i > \alpha\lambda \\
		0, & |z_i| \leq \alpha\lambda \\
		z_i + \alpha\lambda, & z_i < -\alpha\lambda
	\end{cases}
	= \mathrm{sgn}(z)\cdot \max \big\{ |z| - \alpha\lambda,\, 0 \big\},
\end{equation}
where the sign, the absolute value, and the maximum functions are all \textit{applied component-wise} to $z$. The function defined by \eqref{sec03:eq08} is known as \textit{soft-} or \textit{shrinkage-thresholding operator}.

Let us note that the soft-thresholding as solution for the proximal mapping in the case of a LASSO regularization allows for a cheap implementation of the proximal GD method, which in turn allows us to circumvent the possible issues of non-differentiability of the $1$-norm.

\section{Proximal Gradient Descent Method with Variable Step Sizes}\label{sec04}

In the previous sections we analyzed the convergence properties of the GD and the proximal GD methods. The underlying result we used throughout the analysis, under appropriate assumptions about the objective function, was that the iterative method leads to maximum decrease if in \eqref{GDMethod} we use a constant step size $\lambda_k \equiv \lambda = \frac{1}{L}$ for $k = 0, 1, \ldots$.

One must, however, note that the $L$-smoothness as a geometric property is not necessarily global. We also note that a convex function is $L$-smooth if and only if its gradient is Lipschitz continuous with constant $L$; see e.g. \cite[Lemma 2.5]{Gartner2023}. For example, the function $f_1(x) = \sqrt{x^2 + 1}$ is globally $L$-smooth since its first derivative $f_1'(x) = x/\sqrt{x^2 + 1}$ is Lipschitz continuous on $\mathbb{R}$ with constant $L_1 = 1$. On the other hand, the function $f_2(x) = x^4$ is not globally $L$-smooth since its first derivative $f_2'(x) = 4x^3$ is not Lipschitz continuous on $\mathbb{R}$. However, on a bounded interval $[a, b] \subset \mathbb{R}$ the function $f_2$ is \textit{locally $L$-smooth} since $f_2'$ is bounded by the constant $L_2 = 4 \max{\{|a|^3, |b|^3\}}$. This discussion implies that $L$-smoothness is a \textit{local property} of functions. In light of the step size selection in the descent methods this implies that the local value of the smoothness constant will likely change throughout the iterations. Thus, it is only natural to explore ways of constructing approximations to the local smoothness constant in neighborhoods of the iterates. These constants can then be used to construct step sizes which, potentially, improve the performance of the descent methods.

In this section we propose a way in which we can construct approximations of the local smoothness constants for the objective function with the intent of using them to calculate step sizes. We still assume that $f$ is \textit{globally} $L$-smooth. In the $k$-th iteration of the method, once $x_{k+1}$ is calculated, we assume $f$ to be \textit{locally} smooth with constant $L_k$ not necessarily equal to $L$, i.e. that $\|\nabla f(x_{k+1}) - \nabla f(x_k)\| \leq L_k \|x_{k+1} - x_k \|$. Since locally the best choice of the step size is still $\lambda_k = 1/L_k$, we \blue{could} use:
\begin{equation}\label{sec04:eq01}
	\lambda_k = \frac{1}{L_k} \leq \frac{\|x_{k+1} - x_k \|}{\|\nabla f(x_{k+1}) - \nabla f(x_k)\|}.
\end{equation}
The approach we propose is inspired by similar approaches outlined in \cite{LiuWangLiu} and \cite{Malitsky}.
\blue{
	In particular, the authors of \cite{Malitsky} use an approximation of a locally optimal step size with suitable correction in order to establish the convergence of the method:
	\begin{equation}
	\begin{aligned}
		\lambda_{k+1} &= \min \left\{ \sqrt{1 + \theta_{k}} \lambda_{k}, \frac{\| x_{k+1} - x_{k}\|}{2 \| \nabla f(x_{k+1}) - \nabla f(x_{k}) \|} \right\} \\
		\theta_{k+1} &= \frac{\lambda_{k+1}}{\lambda_{k}},
	\end{aligned}
	\end{equation}
	with $\lambda_0 > 0$ and $\theta_0 = +\infty$. These step sizes are then used within the usual GD scheme, and within a Nesterov-type accelerated GD scheme.
	
	On the other hand, the authors of \cite{LiuWangLiu} take an approach which approximates the local Lipschitz constant of the objective function with:
	\begin{equation}
		\lambda_{k+1} = \frac{\mu_1 \| x_{k} - y_{k} \|^2}{2 |f(x_k) - f(y_k) - \nabla f(x_k)^T (x_k - y_k)|},
	\end{equation}
	if $2 |f(x_k) - f(y_k) - \nabla f(x_k)^T (x_k - y_k)| > \frac{\mu_0}{\lambda_k} \| x_{k} - y_{k} \|^2$ holds, otherwise:
	\begin{equation}
		\lambda_{k+1} = \lambda_{k} + \min{\left\{ \lambda_{k}, 1 \right\} \cdot \eta_k},
	\end{equation}
	where $0 < \mu_1 < \mu_0 < 1$, $\sum{\eta_k} < \infty$, $\eta_k > 0$, and $y_k$ is a test point within a Nesterov-type accelerated proximal GD scheme. This proves helpful within the non-convexity framework they discuss.
}

\blue{The proposed variable step size proximal GD method, based on \eqref{sec04:eq01}, was developed based on the following rationale: starting with an initial step size $\lambda_0 > 0$, we iteratively generate step sizes ensuring that their magnitude remains smaller than the norm ratio on the right-hand side of \eqref{sec04:eq01}; however, we must also take care not to generate too small step sizes which will slow down the process. \blue{To achieve this} we fix two constants $0 < \mu_1 < \mu_0 < 1$ and a sequence $\{\eta_k\}$ such that $\eta_k>0$ and $\sum\eta_k < \infty$. To update $\lambda_k$ to $\lambda_{k+1}$ in the $k$-th iteration we test if:
	\begin{equation}
		\lambda_k > \frac{\mu_0 \|x_{k+1} - x_k \|}{\|\nabla f(x_{k+1}) - \nabla f(x_k)\|}.
	\end{equation}
If the test is true, then $\lambda_k$ is either greater than the norm ratio, or it is smaller than, but too close to it, i.e. its value is greater than $\mu_0 \cdot 100 \%$ of the value of the norm ratio. In such case the step size should be decreased to an acceptable size, as per \eqref{sec04:eq01}, so the next step size is selected as:
	\begin{equation}
		\lambda_{k+1} = \frac{\mu_1 \|x_{k+1} - x_k \|}{\|\nabla f(x_{k+1}) - \nabla f(x_k)\|},
	\end{equation}
i.e. it is set to $\mu_1\cdot 100\%$ of the norm ratio, which ensures $\lambda_{k+1} < \lambda_{k}$. In case the test is false, the current step size $\lambda_k$ might be too small, so we perform a controlled increase of its magnitude using the sequence $\{\eta_k\}$ by setting $\lambda_{k+1} = \lambda_k + \min{\{\lambda_k, 1\}} \cdot \eta_k$. This variable step size scheme can then be incorporated in any gradient descent scheme. The concrete choices of the constants $\mu_0$ and $\mu_1$ allow for a control over how large departures from the norm ratio from \eqref{sec04:eq01} we are willing to tolerate in the step size generation process. The full description is given in Algorithm~\ref{alg:02}. The test in line 5 is implemented through an equivalent statement for numerical stability.}

\begin{algorithm}[t]
	\caption{Proximal Gradient Descent Method with Variable Step Size}
	\begin{algorithmic}[1]
		\State \textbf{input:} $x_0 \in \mathbb{R}^d$, $\lambda_0 >0$, $0 < \mu_1 < \mu_0 < 1$, $N \in \mathbb{N}$, $\{\eta_k\}$ s.t. $\sum \eta_k < \infty$
		\State \textbf{set:} $k=0$
		\While{$k < N$}
		\State $x_{k+1} = \mathrm{prox}_{\lambda_k} \big(x_k - \lambda_k \nabla f(x_k) \big)$
		\If{$\|\nabla f(x_{k+1}) - \nabla f(x_k) \| > \frac{\mu_0}{\lambda_k} \| x_{k+1} - x_k\|$}
		\State $\lambda_{k+1} = \dfrac{\mu_1 \| x_{k+1} - x_k\|}{\|\nabla f(x_{k+1}) - \nabla f(x_k) \|}$
		\Else 
		\State $\lambda_{k+1} = \lambda_{k} + \min{\{ \lambda_k, 1\}}\cdot \eta_k$
		\EndIf
		\State $k \leftarrow k + 1$
		\EndWhile
	\end{algorithmic}
	\label{alg:02}
\end{algorithm}

The proposed algorithm yields a sequence of values which are bounded away from $1/L_k$ ensuring an \textit{almost optimal} selection of the step sizes as $\lambda_k = 1/L_k$ throughout the iteration process. Moreover, it can be shown that the sequence $\{\lambda_k\}$ is bounded from above and below away from zero, similarly as in \cite{LiuWangLiu}. As illustrated in section~\ref{sec05}, Algorithms \ref{alg:01} and \ref{alg:02} have similar complexities of order $\mathcal{O}(\log{(1/\varepsilon)})$ for the case of $L$-smooth, $\mu$-strongly convex functions.

\section{Numerical Results}\label{sec05}

In this section we summarize the numerical results which illustrate the performance of the methods we analyzed. We have tested the performance of the proximal GD method, both with constant and with variable step sizes, in the context of ordinary least squares (OLS) for linear regression with $\ell^1$ regularization, as described and discussed in section \ref{sec03}. In the case of constant step sizes in Algorithm~\ref{alg:01} we used $\lambda = 1/L$. In the case of variable step sizes in Algorithm~\ref{alg:02} \blue{we have empirically established that appropriate values for the parameters are: $\lambda_0 = 0.1$, $\mu_0 = 0.99$, and $\mu_1 = 0.95$. Additionally, we have compared the performance of these proximal GD methods with the performance of the Adam algorithm \cite{KingmaBa} adapted for $\ell^1$ regularization using \eqref{sec03:l1reg}. All of Adam's parameters were set to their recommended values, as outlined in the original paper \cite{KingmaBa}.}

\blue{We evaluated the performance of the proposed method using two datasets: (a) a synthetically generated data set and (b) a real-data set, specifically the King County real estate dataset available in the OpenML database \cite{Schmitt2019}. The \textit{synthetically generated data set}} is characterized by: number of variables/dimensionality $d$, sample size $m$, and number of non-zero coordinates in the optimal solution (i.e. a \textit{sparsity indicator}) $s$. These quantities are roughly related as $s \ll d \ll m$. First, we generated an optimal solution $x^* \in \mathbb{R}^m$ such that only its first $s$ coordinates were non-zero, with the $i$-th coordinate randomly generated in the interval $(0, 1)$ for $i=1, 2, \ldots, s$. Next, we generated the ``main'' data set as a random matrix $A \in \mathbb{R}^{m\times d}$ with correlated standard normal entries generated using a correlation matrix $C\in\mathbb{R}^{d\times d}$ where $c_{ij} = 0.5^{|i-j|}$ for $i, j=1, 2, \ldots, d$. Finally, the target vector $b\in\mathbb{R}^m$ was calculated as $b = Ax^* + \xi$ where the random vector $\xi = (\xi_1, \ldots, \xi_m)^T\in\mathbb{R}^m$ with $\xi_1, \ldots, \xi_m$ i.i.d. standard normally distributed random variables. The objective function used in all tests was the usual OLS function with $\ell^1$ regularization \blue{for linear regression}:
\begin{equation}\label{sec05:eq01}
	F(x) = \underbrace{\frac{1}{2m} \|Ax - b\|^2}_{f(x)} + \underbrace{\alpha \|x\|_1}_{g(x)}.
\end{equation}
Under these conditions we generated three data sets such that:
\[
	(d, m, s) \in \big\{ (300, 30\,000, 30), (500, 50\,000, 50), (800, 80\,000, 80) \big\}
\]
\blue{
	The \textit{King County real estate data set} is a standard regression model analysis data set accessible through OpenML's online database \cite{Schmitt2019}. The goal was to model the selling price of property/houses in King County based on the houses' characteristics and geographical position. In total, there were 18 data features which we used as independent variables in the model. No data were missing from the data set. For the purpose of model evaluation, the data was randomly split into a train and test data sets, while preprocessing included only standardization of all model variables.
}

The value of the regularization parameter $\alpha$ was set to $\alpha=0.01$, while the maximum number of iterations was set to $N = 1000$ in all tests. The smoothness constant $L$ was calculated as the largest eigenvalue of the matrix $\frac{1}{2m}A^TA$.

Apart from the obvious stopping criterion of reaching the maximum number of iterations, the methods also stopped if the norm of the gradient of the smooth part $f$ fell below a threshold of $0.001$, i.e. if $\|\nabla f (x_k) \| < 0.001$\blue{; additionally, the proximal GD methods (Algorithm~\ref{alg:01} and Algorithm~\ref{alg:02}) stopped} if the sequence of function values started displaying non-monotone behavior, i.e. $F(x_{k+1}) > F(x_k)$ at some iteration $k$. This last condition proves useful in early stopping since it only seems to ``activate'' in presence of noise close to the optimal solution; \blue{since Adam is non-monotone by design, this stopping criterion was not used with Adam}. For Algorithm~\ref{alg:01} and Algorithm~\ref{alg:02} we measured the average CPU time required, out of $7$ runs under identical initial conditions, for the optimization to stop. The tests and simulations were coded in \textit{Python} and all testing was performed via \textit{Google Colab}. Only default available computational resources allocated to users were used (relevant for the reported CPU times quantities). To make the execution times comparable for the proximal GD methods (Algorithm~\ref{alg:01} and Algorithm~\ref{alg:02}), it was decided that the variable step size proximal GD method would also include a subroutine for calculation of the Lipschitz constant $L$ (the constant step size proximal GD method anyhow must calculate it), even though it neither need it nor use it.

The detailed results about the number of iterations needed for the solver to stop, time of the run (in seconds, average run time in $7$ repetitions), and its speed (in iterations per second) for the synthetic data sets are given in Table~\ref{tab:01}. As can be seen, the proximal GD method with variable step size requires much fewer iterations to solve the problem, and as a result finds a solution much more quickly than its constant step size counterpart (Algorithm~\ref{alg:01}). However, as can be seen in the table, the \blue{constant step size proximal GD method} has better per iteration speed which is likely due to the fact that the variable step size method must recalculate step sizes at every iteration.

\begin{table}
	\centering
	\begin{tabular}{|c||c|c|c||c|c|c|}
		\cline{2-7}\multicolumn{1}{c||}{} & \multicolumn{3}{c||}{Constant step} & \multicolumn{3}{c|}{Variable step} \bigstrut\\
		\multicolumn{1}{c||}{} & \multicolumn{3}{c||}{(Algorthm~\ref{alg:01})} & \multicolumn{3}{c|}{(Algorthm~\ref{alg:02})} \bigstrut\\
		\hline
		Dimension & Iter  & Time  & Speed & Iter  & Time  & Speed \bigstrut\\
		\hline
		\hline
		$300$   & $152$   & $3.92$  & $38.78$ & $68$    & $2.07$  & $32.85$ \bigstrut\\
		\hline
		$500$   & $181$   & $12.00$  & $15.08$ & $77$    & $5.93$  & $12.98$ \bigstrut\\
		\hline
		$800$   & $229$   & $37.80$  & $6.06$  & $69$    & $14.30$  & $4.83$ \bigstrut\\
		\hline
	\end{tabular}%
	\caption{Performance comparison for the proximal GD methods with constant and with variable step size on the synthetic data sets. \blue{Time is given in seconds, while speed in iterations per second.}}
	\label{tab:01}%
\end{table}%

\blue{As an additional evaluation of the performance, in the case of the synthetic data, we analyze the values of the objective function $F(x_k)$, and the distances to the optimal solution $\|x_k - x^*\|$, visualized on Figure~\ref{fig:comp_f} and Figure~\ref{fig:comp_x} respectively. The comparison on these figures shows the performance of the proximal GD methods (Algorithm~\ref{alg:01} and Algorithm~\ref{alg:02}), and Adam. The better performance of the variable step size proximal GD method is evident on both figures. It is obvious that the variable step size proximal GD method consistently outperforms the constant step size proximal GD method. The poor performance of Adam (along with noticeable instability in the latter iterations) compared to the variable step size proximal GD method is likely due to the fact that, unlike the proximal GD methods, it is not designed to \textit{naturally} work with $\ell^1$ regularization. On the other hand, the good performance of the variable step size proximal GD method} can be attributed to adaptations of the step sizes to the local geometry of the objective function and are sometimes less than, sometimes greater than the \textit{optimal constant step} $1/L$, as shown in Figure~\ref{fig:step_size}.

\begin{figure}
	\centering
	\begin{subfigure}[b]{0.3\textwidth}
		\includegraphics[width=\textwidth]{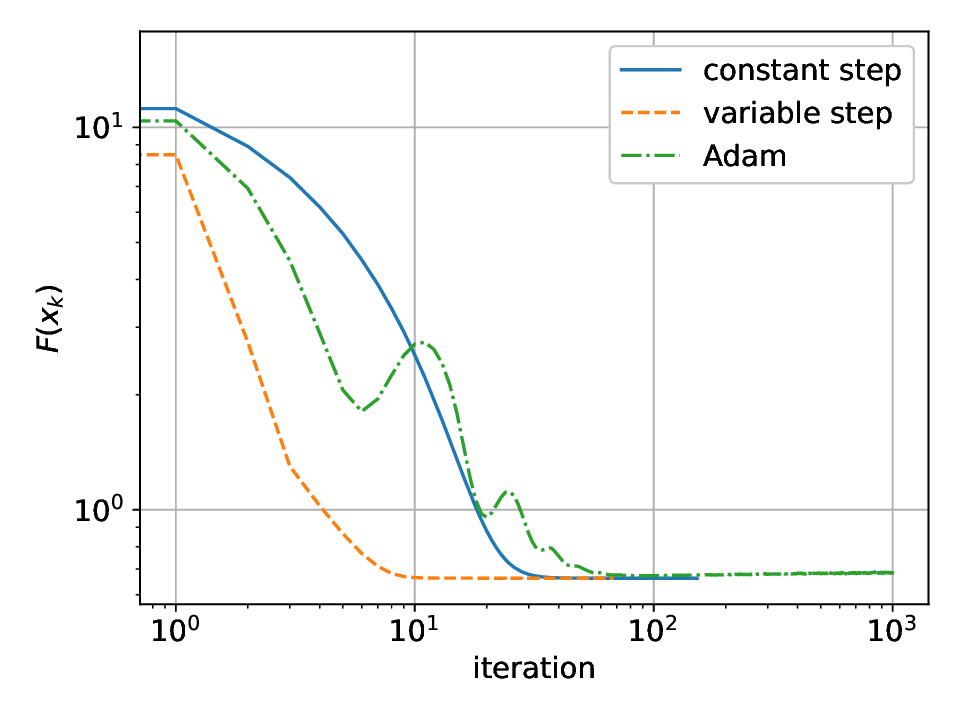}
		\caption{}
		\label{fig:comp_f_a}
	\end{subfigure}
	~ 
	\begin{subfigure}[b]{0.3\textwidth}
		\includegraphics[width=\textwidth]{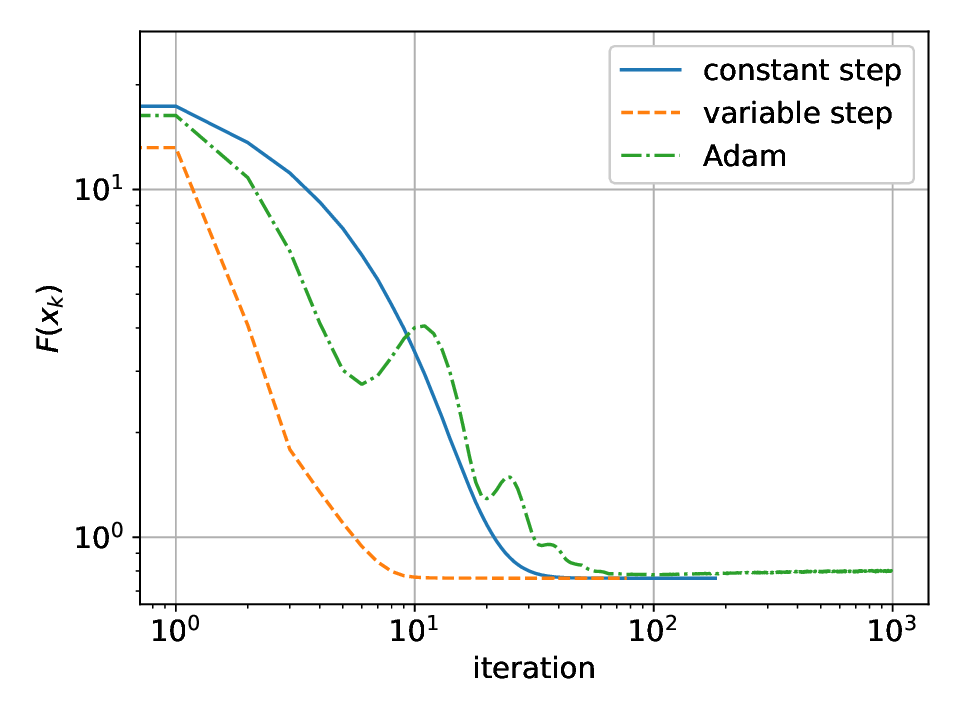}
		\caption{}
		\label{fig:comp_f_b}
	\end{subfigure}
	~ 
	\begin{subfigure}[b]{0.3\textwidth}
		\includegraphics[width=\textwidth]{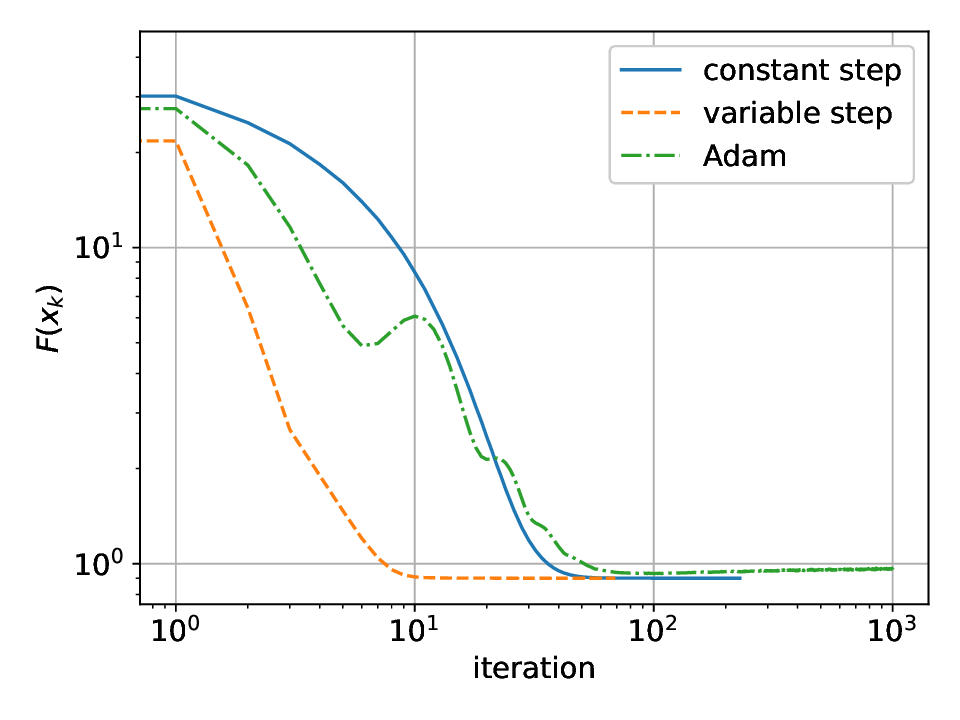}
		\caption{}
		\label{fig:comp_f_c}
	\end{subfigure}
	\caption{Comparison of the progress of the sequence $\{F(x_k)\}$ of function values generated by Algorithm~\ref{alg:01} (constant step), Algorithm~\ref{alg:02} (variable step), and Adam on the synthetically generated data sets for: (a) $d=300$; (b) $d=500$; (c) $d=800$.}
	\label{fig:comp_f}
\end{figure}

\begin{figure}
	\centering
	\begin{subfigure}[b]{0.3\textwidth}
		\includegraphics[width=\textwidth]{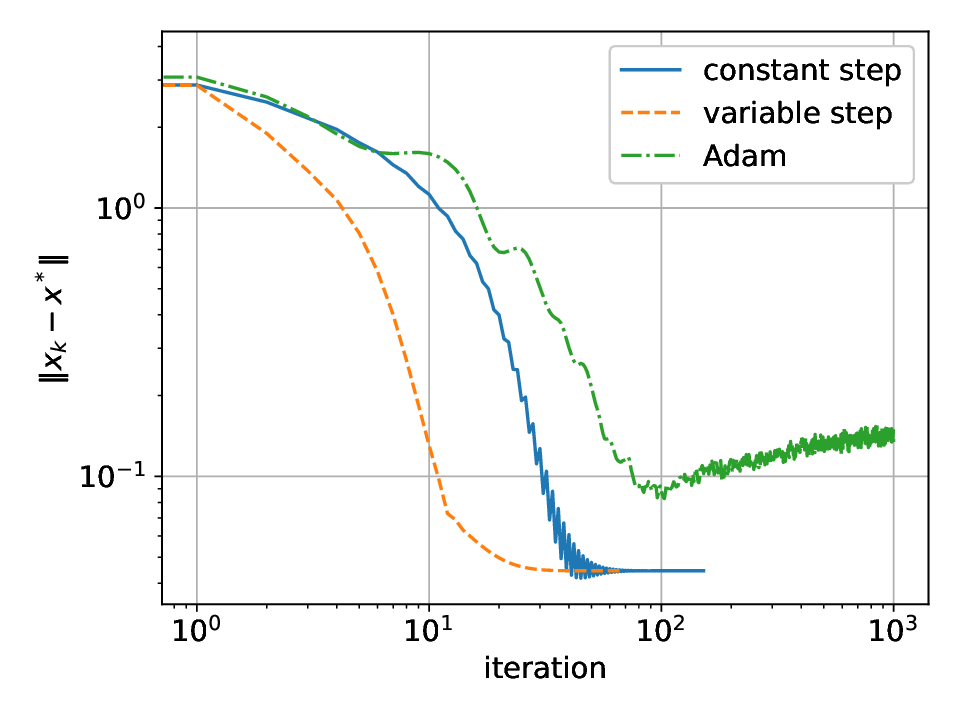}
		\caption{}
		\label{fig:comp_x_a}
	\end{subfigure}
	~ 
	\begin{subfigure}[b]{0.3\textwidth}
		\includegraphics[width=\textwidth]{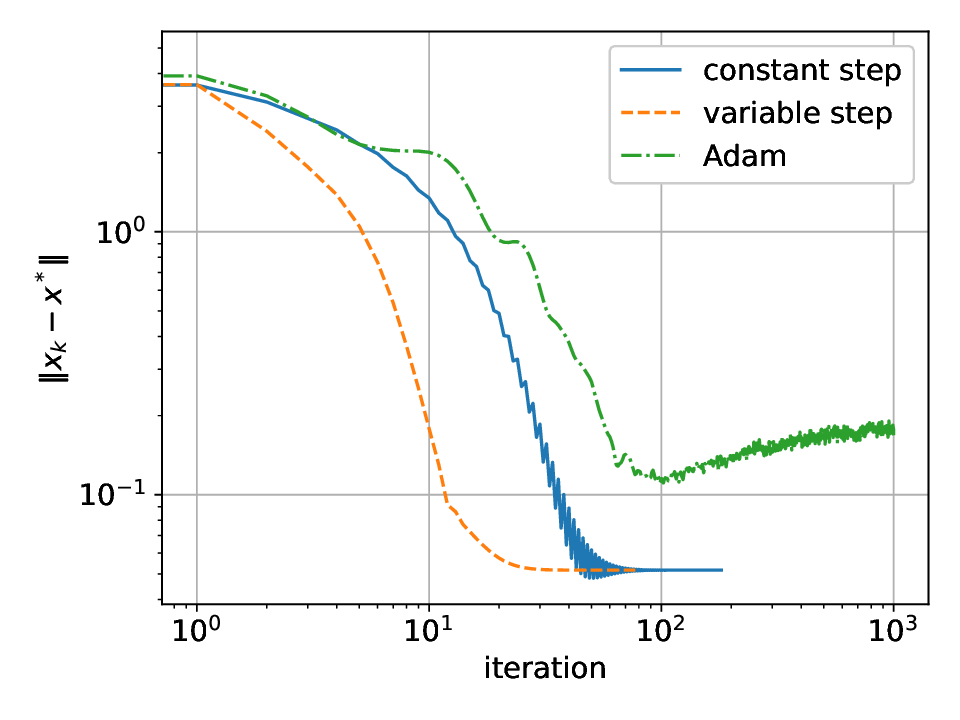}
		\caption{}
		\label{fig:comp_x_b}
	\end{subfigure}
	~ 
	\begin{subfigure}[b]{0.3\textwidth}
		\includegraphics[width=\textwidth]{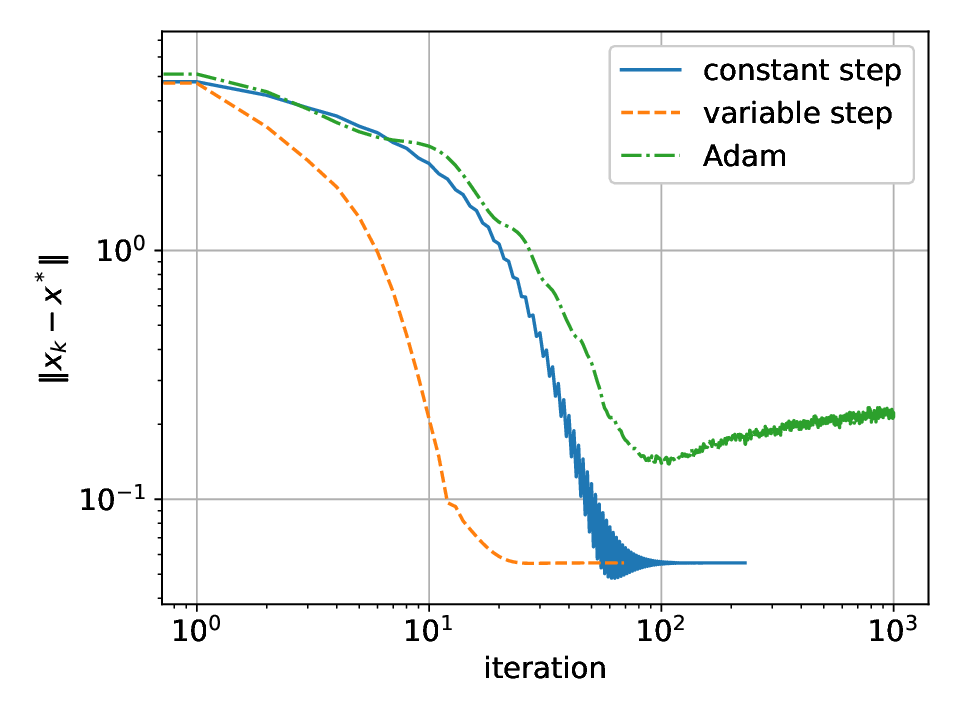}
		\caption{}
		\label{fig:comp_x_c}
	\end{subfigure}
	\caption{Comparison of the progress of the sequence $\{x_k\}$ of iterates generated by Algorithm~\ref{alg:01} (constant step), Algorithm~\ref{alg:02} (variable step), and Adam towards the optimal solution $x^*$ on the synthetically generated data sets for: (a) $d=300$; (b) $d=500$; (c) $d=800$.}
	\label{fig:comp_x}
\end{figure}

\begin{figure}
	\centering
	\includegraphics[width=0.5\textwidth]{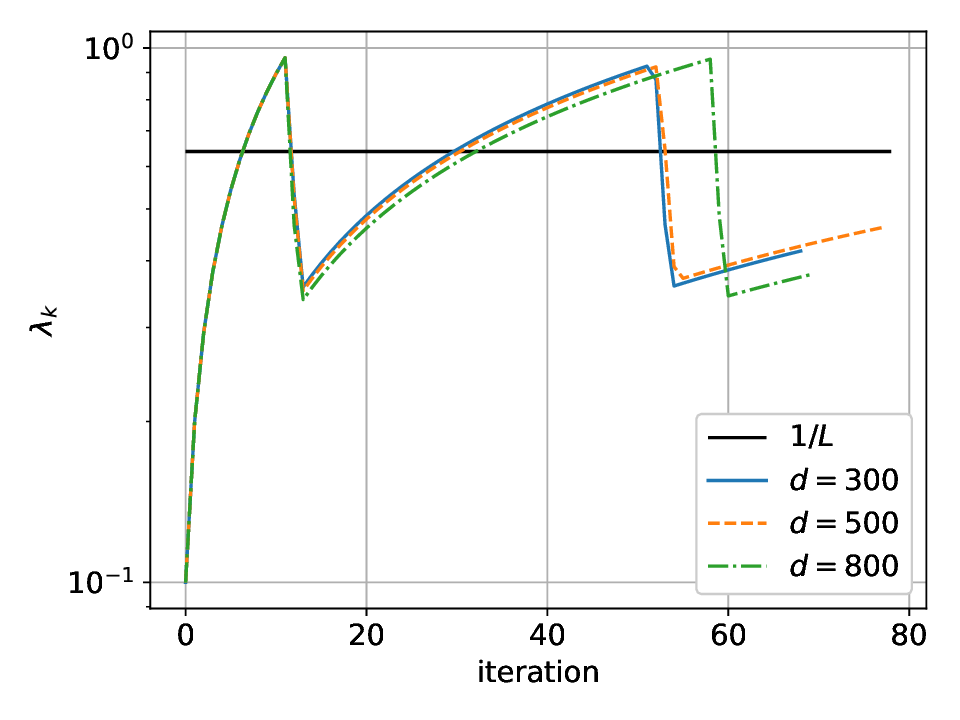}
	\caption{Changes of the step sizes $\lambda_k$ generated by the scheme given in Algorithm~\ref{alg:02} \blue{on the synthetically generated data sets} for different values of $d$. The value of the constant step $\lambda = 1/L$ is given for comparison. For all three dimensions, $1/L \approx 0.64$.}
	\label{fig:step_size}
\end{figure}

\blue{
The models' performance on the King County property/house prices real-data set is slightly different. In this case we can only use the changes of the objective function values $F(x_k)$ as a performance measure, since the optimal value $x^*$ is unknown. These are shown on Figure~\ref{fig:real_data_func_val}. Once again the variable step size proximal GD method outperforms the other two methods, but in this case Adam performs better than the constant step size proximal GD method. It is interesting to note that Adam performs better than the proximal GD with variable step size in the first few iterations.
}

\begin{figure}
	\centering
	\includegraphics[width=0.5\textwidth]{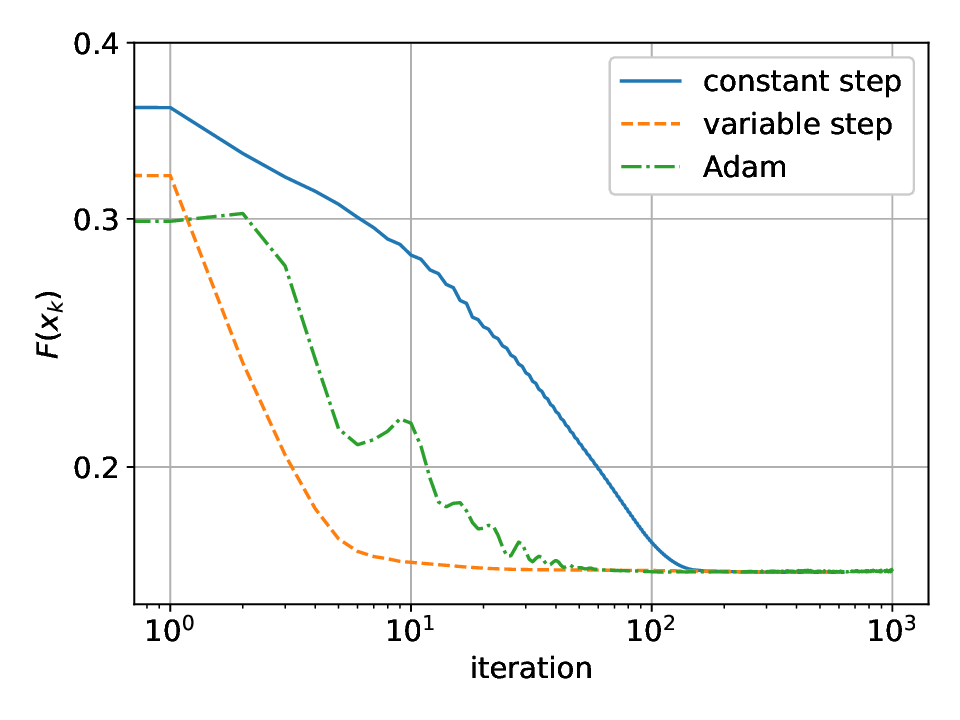}
	\caption{Comparison of the progress of the sequence $\{F(x_k)\}$ of function values generated by Algorithm~\ref{alg:01} (constant step), Algorithm~\ref{alg:02} (variable step), and Adam on the King County house prices data set.}
	\label{fig:real_data_func_val}
\end{figure}

\section{Conclusion}\label{sec06}

In this paper we outlined the basic ideas behind the GD and the proximal GD methods, and we have analyzed their performance.

The initial analysis centered around a constant choice for the step size in the iterative process, setting it to the reciprocal of the smoothness constant $L$. As discussed further in the paper, the magnitude of the smoothness constant is a local rather than a global property, so we looked at one possible way of approximating the local smoothness constants with the goal of using them to calculate locally appropriate step sizes which may vary throughout the iterations. Therefore, a new proximal GD method with variable step sizes was proposed.

Both methods were tested on set of three, large-size, random synthetic data sets, \blue{plus one real-data set}, all in context of an $\ell^1$ regularized OLS problem \blue{for linear regression. Additionally, we compared the performance of the constant and variable step size proximal GD methods with Adam, one of the most commonly used first-order optimizers with momentum-based adaptive step sizes in machine learning}. Numerical results showed that the variable step size proximal GD method \blue{outperforms the constant step size proximal GD by a} margin \blue{in terms of number of iterations and overall time}, thus practically demonstrating the advantage of using local smoothness constants as opposed to a global(ized) one. However, the constant step size proximal GD method has clearly better performance in terms of speed in iterations per second. Even though \blue{Adam has a momentum-based adaptive scheme, and variable step sizes, the proposed variable step size proximal GD method outperforms Adam on both the synthetic and real-data sets}.

As evidenced in numerical testing, the proposed variable step size proximal GD method retains the same order of complexity as the constant step size proximal GD method. Providing theoretical results for these insights would definitely be worth exploring in depth in the future.

\end{document}